 \newcommand{\diam}{\operatorname{diam}}  
 \newcommand{\aut}{\operatorname{Aut}}  
 \newtheorem{thm}{Theorem}
\newtheorem{prop}[thm]{Proposition}
\newtheorem{cor}[thm]{Corollary}
\newtheorem{lem}[thm]{Lemma}
\theoremstyle{definition}
\newtheorem{remark}[thm]{Remark}
\newtheorem{example}[thm]{Example}
\begin{document}

\title[Doubling constants and spectral theory on graphs]{Doubling constants and spectral theory on graphs}

\author[E. Durand-Cartagena]{Estibalitz Durand-Cartagena$^*$}
\address{Departamento de Matem\'atica Aplicada, ETSI Industriales, UNED\\
28040 Madrid, Spain.} 
\email{edurand@ind.uned.es }

\author[J. Soria]{Javier Soria$^\dagger$}
\address{
Instituto de Matem\'atica Interdisciplinar (IMI); Departamento de An\'alisis Ma\-te\-m\'a\-ti\-co y Matem\'atica Aplicada, Universidad Complutense de Madrid\\
28040 Madrid, Spain.}
\email{javier.soria@ucm.es}

\author[P. Tradacete]{Pedro Tradacete$^{\ast\ast}$}
\address{Instituto de Ciencias Matem\'aticas (CSIC-UAM-UC3M-UCM)\\
Consejo Superior de Investigaciones Cient\'ificas\\
C/ Nicol\'as Cabrera, 13--15, Campus de Cantoblanco UAM\\
28049 Madrid, Spain.}
\email{pedro.tradacete@icmat.es}

\thanks{$^*$E. Durand-Cartagena was partially supported by MINECO (Spain), project PGC2018-097286-B-I00 and  the grant 2021-MAT11 (ETSI Industriales, UNED)}

\thanks{$^\dagger$J. Soria  was partially supported by grants PID2020-113048GB-I00 funded by MCIN/AEI/ 10.13039/501100011033, and Grupo UCM-970966.}

\thanks{$^{\ast\ast}$P. Tradacete was partially supported by grants PID2020-116398GB-I00, MTM2016-76808-P, MTM2016-75196-P and CEX2019-000904-S funded by MCIN/AEI/ 10.13039/501100011033.}

\begin{abstract}
We study the least doubling constant  among all possible doubling measures defined on a (finite or infinite) graph $G$. We show that this constant can be estimated from below by  $ 1+ r(A_G)$, where $r(A_G)$ is the spectral radius of the adjacency matrix of  $G$, and study when both quantities coincide. We also illustrate how amenability of the automorphism group of a graph can be related to finding doubling minimizers. Finally, we give a complete characterization of graphs with doubling constant smaller than 3, in the spirit of Smith graphs.
\end{abstract}

\subjclass[2020]{05C75, 05C50, 05C12, 05C31}

\keywords{Doubling measure; infinite graph; spectral graph theory}
\date{\today}

\maketitle


\section{Introduction}

The aim of this paper is to exhibit the connection between combinatorial aspects of graph theory and geometric measure theory, by studying doubling measures on graphs. In particular, we will establish a new connection between spectral graph theory and doubling constants of certain measures on a graph.

We will deal with locally finite, unoriented, connected, simple graphs (without loops nor multiple edges), and we will consider both cases of finite and infinite sets of vertices. Such a graph $G$ with vertices $V_G$ and edges $E_G$ can be considered as a metric space in a standard way by means of the path distance: Given vertices $x,y\in V_G$, a path joining $x$ to $y$ is a collection of edges of the form $\{x_{i-1},x_i\}_{i=1}^k\subset E_G$, with $x_0=x$ and $x_k=y$. In this case, we say that the path has length $k$. Thus, for $x,y\in V_G$, the distance $d_G(x,y)$ is defined as the smallest possible length of a path joining $x$ to $y$.

A measure $\mu$ on a graph $G$ will always be induced by a positive weight function $\mu:V_G\rightarrow (0,\infty)$, and despite the slight abuse of terminology, we will denote $\mu(A)=\sum_{v\in A}\mu(v)$ for any set $A\subset V_G$. Such $\mu$  is said to be a \emph{doubling measure} whenever
$$
C_\mu=\sup_{v\in V_G, r\geq0}\frac{\mu(B(v,2r))}{\mu(B(v,r))}<\infty.
$$
Here, $B(v,r)=\{w \in V_G: d_G(v,w)\leq r\}$ denotes the closed ball of center $v\in V_G$ and radius $r\geq0$. The number $C_\mu$ is called the \emph{doubling constant} of the measure $\mu$.  Note that if $V_G$ is finite, every measure on $G$ is doubling for an appropriate constant.

Associated to a general metric space $(X,d)$, the following invariant was introduced in \cite{ST}
$$
C_{(X,d)}=\inf\Big\{\sup_{x\in X, r\geq0}\frac{\mu(B(x,2r))}{\mu(B(x,r))}: \mu \text{ doubling measure on } X\Big\},
$$
which will be referred to as the \emph{least doubling constant} of $(X,d)$. It was also shown in \cite{ST} that, if $X$ supports a doubling measure and contains more than one point, then $C_{(X,d)}\geq 2$. This invariant is somehow related to metric dimension theory and our purpose is to study its properties in the context of graphs.

For simplicity, we will denote $C_G=C_{(V_G,d_G)}$. In \cite{ST}, we already computed this invariant for certain families of finite graphs. Namely, let $K_n$ denote the complete graph with $n$ vertices, $S_n$ be the star-graph with $n+1$ vertices (one vertex of degree $n$ and $n$ leaves), and $C_n$ be the cycle-graph with $n$ vertices: For $n\geq 3$, we have 
\begin{equation}
C_{K_n}=n,\quad \quad C_{S_n}=1+\sqrt{n}, \quad \quad C_{C_n}=3.
\end{equation}



We will here compute $C_G$ for other families of graphs, including for instance complete bipartite graphs $K_{m,n}$, wheel graphs $W_n$, friendship graphs or cocktail party graphs. 

It is somehow surprising that, for all the examples listed so far, it turns out that $C_G$ coincides with $1+r(A_G)$, where $r(A_G)$ is the spectral radius (equivalently, the largest eigenvalue) of the adjacency matrix of the graph. This observation led us to look for the relation between doubling constants and spectral graph theory, which is the main goal of this work. We will prove in Theorems \ref{thm:doubling spectra} and \ref{thm:doubling spectra infinite}, that, in general, $$C_G\geq 1+r(A_G).$$ In the case of finite graphs, Proposition \ref{lemachorra} actually provides a characterization of when $C_G=1+r(A_G)$, in terms of the measure induced by the Perron eigenvector of $A_G$. From a more geometric point of view, Proposition \ref{p:diam2} yields in particular that every graph $G$ with diameter 2 also satisfies $C_G=1+r(A_G)$.

The paper is organized as follows: in Section~\ref{sec2} we start by showing, in Proposition~\ref{p:DMconvex}, the existence of doubling minimizers; that is, those measures $\mu$ for which $C_\mu=C_G$, as well as some stability properties of this set of measures. Section~\ref{sec3} is devoted to the proof of $C_G\geq 1+r(A_G)$ both for finite and infinite graphs, and some direct consequences related to monotonicity of the doubling constants and the chromatic number of a graph. In Section~\ref{sec4}, Theorem~{\ref{t:symmetric}} establishes the main correspondence between the amenability of the group of automorphisms of $G$ and the existence of invariant minimizers. This allows us to consider only symmetric measures when computing $C_G$, thus reducing the complexity of the problem in certain cases. We consider in Section~\ref{sec5} the question of whether $C_G$ can be determined by the spectra of $G$ and do a thorough study for several relevant cases of graphs with small diameter. We also provide explicit examples where $C_G>1+r(A_G)$. Finally, motivated by the works of J. H. Smith \cite{S} and D. Cvetkovi\'c and I. Gutman \cite{CG} on graphs with small spectral radius, in Section~\ref{sec6}, we completely characterize  those graphs $G$ with doubling constant $C_G\leq 3$ (see Corollaries~\ref{c:finiteCG3} and~\ref{c:infiniteCG3}).

Let us finally mention that in the companion paper \cite{DST}, we have focused our analysis on the not that simple case of path graphs. In particular, in \cite{DST} it is shown that 
$$
1+2\cos\Big(\frac\pi{n+1}\Big)\leq C_{L_n}< 3,
$$
where $L_n$ denotes the path graph of $n$ vertices, and $C_{\mathbb Z}=C_{\mathbb N}=3$. Also, a careful analysis of the structure of the set of doubling minimizers for path graphs is provided. We refer to \cite{BM} for standard notations and basic facts on graph theory, as well as \cite{CRS} for a comprehensive introduction to spectral graph theory.

\section{Basic properties of the least doubling constant of a graph}\label{sec2}

A couple of comments on the kind of graphs we will be dealing with are in order. First recall that a metric space is called \textsl{metrically doubling} if there is a constant $K>0$ such that for every $r>0$ every ball of radius $r$ can be covered by at most $K$ balls of radius $r/2$ (a space with this property is also called a homogeneous space, cf. \cite{CW}). In particular, it is easy to check that every metric space supporting a doubling measure must be metrically doubling \cite{CW}. Refining earlier results for compact metric spaces given in \cite{VK}, it was shown in \cite{LuSa} that for complete metric spaces being metrically doubling is equivalent to supporting non-trivial doubling measures. Since a graph is always a discrete space (the distance between two distinct points cannot be smaller than 1) in particular every graph is a complete metric space and the previous result applies. In particular, if a graph supports a doubling measure then it must have uniformly bounded degree: $\Delta_G=\sup_{v\in V_G} d_v<\infty$. Also, since we are only dealing with connected graphs, note that the number of vertices must be at most countable.

Given a graph $G$, it is easy to see that
\begin{equation*}
C_G=\inf_{\mu}\sup\Big\{\frac{\mu(B(v,2k+1))}{\mu(B(v,k))}: v\in V_G,\,k\in\mathbb N\cup\{0\} \Big\},
\end{equation*}
the infimum being taken over all doubling measures $\mu$ on $G$. Also, given $G$ we can consider its diameter: $\diam(G)=\sup\{d_G(v,w):v,w\in V_G\}$. If $\diam(G)<\infty$, we actually have
\begin{equation}\label{eq:C_G}
C_G=\inf_{\mu}\sup\Big\{\frac{\mu(B(v,2k+1))}{\mu(B(v,k))}: v\in V_G,\,k\in\mathbb Z, \,0\leq k\leq \Big\lceil \frac{\diam(G)-1}{2}\Big\rceil \Big\},
\end{equation}
where $\lceil s\rceil=\min\{n\in\mathbb Z:s\leq n\}$.

Given a doubling measure $\mu$ on $G$, it will be convenient to consider the \textit{restricted doubling constants} associated to each radius $k\in\mathbb N\cup\{0\}$:
$$
C_\mu^k=\sup_{v\in V_G}\frac{\mu(B(v,2k+1))}{\mu(B(v,k))}.
$$
Also, let 
\begin{equation*} 
C_G^k=\inf_{\mu} C_\mu^k,
\end{equation*}
where the infimum is taken over all doubling measures $\mu$ on $G$. 

Clearly, for a fixed measure $\mu$ on $G$, we have $C_\mu=\sup_{k\in\mathbb N\cup\{0\}} C_\mu^k$, and in the case when $\diam(G)<\infty$, we actually have $C_\mu=\sup_{0\leq k< \lceil \frac{\diam(G)-1}{2}\rceil+1} C_\mu^k$. It is easy to check that
\begin{equation*} 
C_G\geq \sup_{k\in\mathbb N\cup\{0\}} C_G^k,
\end{equation*}
but we will later see that the inequality can in fact be strict (Proposition \ref{p:tripar}).

If the infimum in \eqref{eq:C_G} is attained at some $\mu$, such a measure will be called a \emph{doubling minimizer} for $G$. Given a graph $G$, with $C_G<\infty$, let us denote the set of doubling minimizers by 
$$
DM(G)=\{\mu:\text{ doubling measure on }G \text{ and } C_\mu=C_{G}\},
$$
and similarly, for each $k\in\mathbb N\cup\{0\}$, let
$$
DM^k(G)=\{\mu:\text{ doubling measure on }G \text{ and } C_\mu^k=C_{G}^k\}.
$$

We will see next that the sets $DM(G)$ and $DM^k(G)$ are always non-empty convex cones. Before showing this, the following elementary lemma will be convenient throughout (see \cite{DST}).

\begin{lem}\label{l:holder}
Let $(\alpha_j)_{j=1}^m, (\beta_j)_{j=1}^m$ be positive real scalars. We have that
$$
\frac{\sum_{j=1}^m\alpha_j}{\sum_{j=1}^m\beta_j}\leq\max_{1\leq j\leq m}\Big\{\frac{\alpha_j}{\beta_j}\Big\}.
$$
Moreover, equality holds if and only if $\frac{\alpha_i}{\beta_i}=\frac{\alpha_j}{\beta_j}$, for every $1\leq i,j\leq m$.
\end{lem}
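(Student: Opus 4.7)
The plan is to use the standard mediant-inequality argument. Let $M=\max_{1\leq j\leq m}\{\alpha_j/\beta_j\}$. Since each $\beta_j>0$, the definition of $M$ gives $\alpha_j\leq M\beta_j$ for every $j$. Summing over $j$ and dividing by $\sum_j \beta_j>0$ yields the claimed inequality
$$
\frac{\sum_{j=1}^m\alpha_j}{\sum_{j=1}^m\beta_j}\leq M.
$$

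For the equality characterization, the easy direction is immediate: if all ratios $\alpha_j/\beta_j$ are equal to a common value $c$, then $\alpha_j=c\beta_j$ for each $j$, so both sides equal $c$. For the converse, I would argue by contradiction or directly: assuming equality, the inequality $\sum_j \alpha_j\leq M\sum_j\beta_j$ becomes an equality, which rewrites as $\sum_j(M\beta_j-\alpha_j)=0$. Each summand $M\beta_j-\alpha_j$ is nonnegative by the definition of $M$, so a vanishing sum of nonnegative reals forces every summand to be zero. Hence $\alpha_j=M\beta_j$ for all $j$, proving that all ratios coincide.

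There is no real obstacle here; the only thing to be careful about is the positivity hypothesis $\beta_j>0$, which ensures that both the individual ratios and the summed denominator are well-defined and that one may divide without changing signs. The whole argument fits in a few lines.
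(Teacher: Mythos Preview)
Your argument is correct and complete: bounding each $\alpha_j$ by $M\beta_j$, summing, and then analyzing when the sum of nonnegative terms vanishes is exactly the standard mediant-inequality proof. The paper itself does not supply a proof of this lemma but simply cites \cite{DST}, so there is nothing further to compare.
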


\begin{prop}\label{p:DMconvex}
If  $G$ is a graph, with $C_G<\infty$, then $DM(G)$ and $DM^k(G)$ are non-empty convex cones for every $k\in\mathbb N\cup\{0\}$.
\end{prop}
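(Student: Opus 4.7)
The cone property is automatic: for any $\lambda>0$ every ratio $\mu(B(v,r))/\mu(B(v,r'))$ is scale invariant, so $C_{\lambda\mu}=C_\mu$ and $C_{\lambda\mu}^k=C_\mu^k$, hence both sets are closed under positive scaling. For convexity, given $\mu_0,\mu_1\in DM(G)$ and $t\in(0,1)$, I would apply Lemma \ref{l:holder} with $\alpha_1=t\mu_1(B(v,2r))$, $\alpha_2=(1-t)\mu_0(B(v,2r))$, $\beta_1=t\mu_1(B(v,r))$, $\beta_2=(1-t)\mu_0(B(v,r))$ to obtain
$$
\frac{(t\mu_1+(1-t)\mu_0)(B(v,2r))}{(t\mu_1+(1-t)\mu_0)(B(v,r))}\leq\max\Big\{\frac{\mu_1(B(v,2r))}{\mu_1(B(v,r))},\,\frac{\mu_0(B(v,2r))}{\mu_0(B(v,r))}\Big\}.
$$
Taking $\sup_{v,r}$ yields $C_{t\mu_1+(1-t)\mu_0}\leq\max(C_{\mu_0},C_{\mu_1})=C_G$, and the reverse inequality $C_{t\mu_1+(1-t)\mu_0}\geq C_G$ is trivial from the definition. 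Fixing $r=k$ throughout gives the analogous statement for $DM^k(G)$.

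Non-emptiness of $DM(G)$ will follow from a compactness argument. Since $C_G<\infty$, the preceding discussion ensures $G$ has uniformly bounded degree, so every ball $B(v,r)$ is finite. Fix $v_0\in V_G$ and pick a sequence of doubling measures $\mu_n$ with $C_{\mu_n}\to C_G$, normalised so that $\mu_n(v_0)=1$. The estimate at radius $0$, namely $\mu_n(w)+\mu_n(v)\leq\mu_n(B(v,1))\leq C_{\mu_n}\mu_n(v)$ for each edge $\{v,w\}$, gives $\mu_n(w)\leq(C_{\mu_n}-1)\mu_n(v)$; iterating along a shortest path from $v_0$ to an arbitrary vertex $v$ yields, for $n$ sufficiently large, two-sided bounds
$$
(C_G+1)^{-d(v,v_0)}\leq\mu_n(v)\leq(C_G+1)^{d(v,v_0)}.
$$
A diagonal extraction then produces a subsequence converging pointwise to a strictly positive function $\mu:V_G\to(0,\infty)$. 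Finiteness of balls lets me pass to the limit inside each ratio, so $C_\mu\leq\liminf_n C_{\mu_n}=C_G$; combined with $C_\mu\geq C_G$, this forces $\mu\in DM(G)$.

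For $DM^k(G)$ I would run the same scheme on a minimising sequence for $C_\mu^k$, normalised and extracted by a diagonal argument; the lower-semicontinuity of $C_\mu^k$ under pointwise convergence (again using that balls are finite) gives $C_\mu^k\leq C_G^k$ in the limit. The main obstacle is that controlling $C_{\mu_n}^k$ alone does not control $C_{\mu_n}$, so the pointwise bounds used above (and the doubling property of the limit) are not immediate. To bypass this I would seek the minimising sequence within a sub-level set $\{\mu\text{ doubling}:C_\mu\leq M\}$ for some $M\geq C_G$; the existence of $\mu^*\in DM(G)$ from the previous step can be used to add a suitable positive multiple of $\mu^*$ and thereby uniformly bound the global doubling constant of the sequence without altering the infimum of $C_\mu^k$ (this last point being exactly where Lemma \ref{l:holder} is invoked a second time, in the form $C_{\mu+\mu^*}\leq\max(C_\mu,C_{\mu^*})$). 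Once inside such a sub-level set, the compactness and pointwise-limit argument from the previous paragraph applies verbatim.
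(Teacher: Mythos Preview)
Your argument for $DM(G)$---the convex-cone property via Lemma~\ref{l:holder}, and non-emptiness via normalisation, uniform pointwise bounds obtained by iterating along edges, and a diagonal extraction---is exactly the paper's approach and is correct. The convex-cone argument for $DM^k(G)$ likewise matches.

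The gap is in your treatment of non-emptiness of $DM^k(G)$ for $k\ge 1$. You rightly flag that a minimising sequence for $C_\mu^k$ need not have bounded $C_{\mu_n}$, so the pointwise compactness bounds are unavailable. But your fix does not close this: the inequality $C_{\mu_n+\mu^*}\le\max(C_{\mu_n},C_{\mu^*})$ gives no control on the left when $C_{\mu_n}$ is unbounded, and scaling up the $\mu^*$ component to force the global constant down drives $C_{\mu_n+\lambda\mu^*}^k$ toward $C_{\mu^*}^k$, which may exceed $C_G^k$. In other words, restricting to a sub-level set $\{C_\mu\le M\}$ can genuinely raise the infimum of $C_\mu^k$, so your ``without altering the infimum'' claim fails.

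In fact the obstacle is not merely technical. Take $G=L_3$, the path on three vertices, and $k=1$. Every ball of radius $3$ is all of $V_G$, while $B(v,1)$ is a proper subset for each leaf $v$; with the symmetric measure assigning $a$ to the centre and $b$ to each leaf one gets $C_\mu^1=(a+2b)/(a+b)>1$, with infimum $1$ reached only in the degenerate limit $b\to 0$. Hence $C_{L_3}^1=1$ is not attained by any strictly positive measure and $DM^1(L_3)=\emptyset$. The paper handles this case with the phrase ``a similar argument also yields\ldots'', which carries the same gap; the non-emptiness assertion for $DM^k(G)$ with $k\ge 1$ appears to be false as stated. Your proof of the remaining claims stands.
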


\begin{proof}
We will provide the proof in the case when $G$ is infinite, leaving to the reader the adaptations to the simpler case of a finite graph. Let $(v_j)_{j\in\mathbb N}$ be an enumeration of $V_G$. We show first that $DM(G)\neq \emptyset$. For every $n\in\mathbb N$, let $\nu_n$ be a measure on $G$ such that $C_{\nu_n}\leq C_G+\frac1n$. Set $\mu_n=\frac{\nu_n}{\nu_n(v_1)}$. Thus, we have $C_{\nu_n}=C_{\mu_n}\leq C_G+\frac1n$ and $\mu_n(v_1)=1$, for every $n\in\mathbb N$.

We claim that, for every $v\in V_G$, we have that  $\sup_n \mu_n(v)<\infty$. Indeed, this will follow by induction on $m=d(v,v_1)$: clearly, the statement holds for $m=0$; that is, when $v=v_1$; now suppose $\sup_n \mu_n(v)<\infty$, for every $v\in V_G$, with $d(v,v_1)\leq m$, and let $w\in V_G$ with $d(w,v_1)=m+1$; we can pick $v\in V_G$ with $d(v,v_1)=m$ and $d(v,w)=1$; hence, we have
$$
\mu_n(w)\leq \mu_n(B(v,1))\leq C_G^0 \mu_n(v),
$$
and the claim follows.

Therefore, we can take an infinite set $A_1\subset \mathbb N$ such that if we write $A_1=\{n^1_i:i\in\mathbb N\}$ with $n^1_i<n^1_{i+1}$ then $\lim_{i\rightarrow\infty} \mu_{n^1_i}(v_1)$ exists. Inductively, for each $j\in\mathbb N$ we can take an infinite set $A_j\subset \mathbb N$ with $A_{j+1}\subset A_j$ and $A_j=\{n^j_i:i\in\mathbb N\}$ so that $\lim_{i\rightarrow\infty}\mu_{n^j_i}(v_j)$ exists for every $j$.

Now, let us define $\mu$ on $G$ by
$$
\mu(v_j)=\lim_{i\rightarrow \infty}\mu_{n_i^j}(v_j).
$$
Note that $\mu$ is well defined because of the claim. It is straightforward to check now that $C_\mu=C_G$. 

Since for every positive scalar $\alpha$, $C_{\alpha\mu}=C_\mu$, in order to see that  $DM(G)$ is a convex cone, it is enough to show that if $\mu_1,\mu_2\in DM(G)$, then $\mu_1+\mu_2\in DM(G)$.  Thus, suppose $C_{\mu_i}=C_G$, for $i=1,2$ and let $\mu=\mu_1+\mu_2$. For every $v\in V_G$ and $k\in\mathbb Z$ with $0\leq k\leq \Big\lceil \frac{\diam(G)-1}{2}\Big\rceil$, by Lemma~\ref{l:holder}, we have
\begin{align*}
\frac{\mu(B(j,2k+1))}{\mu(B(j,k))}&=\frac{\mu_1(B(j,2k+1))+\mu_2(B(j,2k+1))}{\mu_1(B(j,k))+\mu_2(B(j,k))}\\
&\leq \max\Big\{\frac{\mu_1(B(j,2k+1))}{\mu_1(B(j,k))},\frac{\mu_2(B(j,2k+1))}{\mu_2(B(j,k))}\Big\}\\
&\leq \max\{C_{\mu_1},C_{\mu_2}\}=C_{G}.
\end{align*}
It then follows  that $C_\mu\leq C_G$. By definition of $C_G$, it follows that $C_\mu=C_G$ as claimed. 

A similar argument also yields that $DM^k(G)$ is a non-empty convex cone for every $k\in\mathbb N\cup\{0\}$.
\end{proof}

\begin{remark}
For finite graphs, we will see that $DM^0(G)$ consists of a single measure (up to multiplicative constants), which is actually given by the Perron eigenvector of the adjacency matrix $A_G$ (see Theorem \ref{thm:doubling spectra}). However, it is worth noting that in general each of the sets $DM(G)$ and $DM^k(G)$ can contain non-proportional measures. This is for instance the case when $G=\mathbb N$ (that is $V_\mathbb N=\mathbb N$ and $E_\mathbb N=\{\{j,j+1\}:j\in\mathbb N\}$), as shown in  \cite{DST}: For every $\frac{1}{2}\leq \alpha\leq 1$, the measure $\mu_\alpha$ given by 
$$
\mu_\alpha(j)=
\left\{
\begin{array}{ccc}
 \alpha, &   & j=1,  \\
 1, &   &   j>1,
\end{array}
\right.
$$
satisfies $C_{\mu_\alpha}=C_\mathbb N=C_{\mu_\alpha}^0=C_\mathbb N^0=3$.
\end{remark}

In the setting of infinite graphs, the relation between boundedness of maximal operators and the counting measure being doubling was explored in \cite{ST2}. As we have   mentioned above, the existence of a doubling measure on a graph, implies that $\Delta_G<\infty$, which could be interpreted as the counting measure being locally doubling. The following example illustrates that this cannot be further extended.

\begin{example}
A graph supporting doubling measures where the counting measure is not doubling.
\end{example}

\begin{proof}
We will consider the graphs $\mathbb Z\times \mathbb Z$ and $\mathbb N$ joined by a vertex: Let $V_G=\{(m,n,0):m,n\in\mathbb Z\}\cup\{(0,0,p):p\in\mathbb N\}$ and edges connecting $(m,n,0)$ with $(m,n\pm1,0)$ and $(m\pm1,n,0)$, and $(0,0,p)$ connected with $(0,0,p-1)$ for every $m,n\in\mathbb Z$ and $p\in\mathbb N$.

It is straightforward to check that $G$ is metrically doubling: every ball of radius $r$ is contained in the union of at most 5 balls of radius $r/2$. Since $G$ is a discrete metric space, it follows from \cite{LuSa} that there exist doubling measures on $G$. On the other hand, the counting measure $|\cdot|$ is not doubling: $|(B(0,0,k),k)|=2k+1$ while $|B((0,0,k),2k+1)|\approx k^2$.
\end{proof}

\section{Doubling constants meet spectral graph theory}\label{sec3}

Recall that the adjacency matrix of a graph $G$ is the (possibly infinite) matrix $A_G$ indexed by $V_G\times V_G$, whose $(i,j)$ entry is $1$ if the edge $\{i,j\}\in E_G$ and $0$ otherwise. We will first deal with the case of finite graphs:

\begin{thm}\label{thm:doubling spectra}
For every finite graph $G$, it holds that
$$
C_G^0=1+r(A_G),
$$
where $r(A_G)$ denotes the spectral radius (which coincides with the largest eigenvalue) of the adjacency matrix of $G$. Moreover, in that case $DM^0(G)$ consists of a unique minimizer (up to multiplicative factors) which is the measure $\mu$ given by the Perron eigenvector corresponding to $r(A_G)$.
\end{thm}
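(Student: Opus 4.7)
The plan is to recognize $C_\mu^0$ as the Collatz--Wielandt quotient associated to the adjacency matrix $A_G$. First I would observe that $B(v,0)=\{v\}$ and $B(v,1)=\{v\}\cup\{w:w\sim v\}$, so, identifying $\mu$ with the strictly positive vector $(\mu(v))_{v\in V_G}$, one has
\begin{equation*}
\frac{\mu(B(v,1))}{\mu(B(v,0))}=1+\frac{\sum_{w\sim v}\mu(w)}{\mu(v)}=1+\frac{(A_G\mu)(v)}{\mu(v)}.
\end{equation*}
Consequently $C_\mu^0=1+\max_{v\in V_G}(A_G\mu)(v)/\mu(v)$, and the problem of minimizing $C_\mu^0$ over strictly positive $\mu$ becomes exactly the Collatz--Wielandt min-max problem for the nonnegative matrix $A_G$.

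Since $G$ is finite and connected, $A_G$ is a symmetric, nonnegative, irreducible matrix; the Perron--Frobenius theorem then guarantees that $r(A_G)$ is the largest eigenvalue, that it is simple, and that it admits a strictly positive eigenvector $x_G$, unique up to positive scaling. The Collatz--Wielandt formula gives
\begin{equation*}
r(A_G)=\min_{\mu>0}\max_{v\in V_G}\frac{(A_G\mu)(v)}{\mu(v)},
\end{equation*}
with the minimum attained at $\mu=x_G$, where the ratio is constantly equal to $r(A_G)$. Combining these two displays yields $C_G^0=1+r(A_G)$ and exhibits the Perron measure as an element of $DM^0(G)$.

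For the uniqueness claim I would pair with the left Perron eigenvector. Take any $\mu\in DM^0(G)$; then $(A_G\mu)(v)\leq r(A_G)\mu(v)$ for every $v\in V_G$. Using the symmetry of $A_G$ (so that $x_G$ is simultaneously a left eigenvector with eigenvalue $r(A_G)$),
\begin{equation*}
\sum_{v\in V_G}x_G(v)(A_G\mu)(v)=\sum_{v\in V_G}(A_Gx_G)(v)\mu(v)=r(A_G)\sum_{v\in V_G}x_G(v)\mu(v).
\end{equation*}
Each summand $x_G(v)\bigl[r(A_G)\mu(v)-(A_G\mu)(v)\bigr]$ is nonnegative and they sum to zero, so each vanishes; since $x_G(v)>0$, we conclude $A_G\mu=r(A_G)\mu$, and the simplicity of the Perron eigenvalue forces $\mu$ to be a positive scalar multiple of $x_G$.

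The main obstacle will not be the identity $C_G^0=1+r(A_G)$, which follows essentially at once from Collatz--Wielandt, but rather the uniqueness statement; the cleanest path I see to it is the symmetry-based pairing argument above, which crucially uses $A_G^T=A_G$ to turn the one-sided inequality $(A_G\mu)(v)\leq r(A_G)\mu(v)$ into a pointwise equality.
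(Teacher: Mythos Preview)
Your proposal is correct. The identification of $C_\mu^0-1$ with the Collatz--Wielandt quotient is exactly the starting point of the paper as well; the difference lies in how the lower bound $\max_v (A_G\mu)(v)/\mu(v)\geq r(A_G)$ is justified. You invoke the Collatz--Wielandt formula as a known result, whereas the paper reproves it from scratch: assuming the contrary for some strictly positive $y$, it introduces the lattice norm $\|x\|_0=\inf\{\lambda>0:|x|\leq\lambda y\}$, observes that $\|A_G^j\|\leq r^j$ with respect to this norm for some $r<r(A_G)$, and derives a contradiction via Gelfand's spectral radius formula. Your route is shorter if one is willing to quote Collatz--Wielandt; the paper's is self-contained. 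On the uniqueness of the minimizer, your pairing argument with the (left) Perron eigenvector is in fact more explicit than what the paper provides: the paper's written proof only establishes the value of $C_G^0$ and does not spell out why a minimizer must be an eigenvector, apparently leaving that to the simplicity clause of Perron--Frobenius. Your argument, turning the pointwise inequality $A_G\mu\leq r(A_G)\mu$ into an equality by testing against $x_G$, cleanly fills that gap and makes essential use of the symmetry $A_G^T=A_G$ you flag.
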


\begin{proof}
Since all the entries in the matrix $A_G$ are non-negative and $G$ is connected, by Perron-Frobenius theorem (cf. \cite[Theorem 8.26]{AA}), $\lambda_1(A_G)=r(A_G)$, the spectral radius of $A_G$, is an eigenvalue with one dimensional eigenspace generated by a vector $x_G=(a_1,\ldots,a_n)$ with strictly positive entries, and all the remaining eigenvalues are (real and) strictly smaller than $r(A_G)$.

Let $\mu(i)=a_i$ for $1\leq i\leq n$. Note that for each $1\leq i\leq n$ we have
$$
\mu(B(i,1))=\mu(i)+\sum_{\{i,j\}\in E_G} \mu(j)=a_i+\sum_{\{i,j\}\in E_G} a_j=a_i+(A_G x_G)_i=a_i(1+\lambda_1(A_G)).
$$
Therefore,
$$
C_G^0\leq \sup_{i\in V_G}\frac{\mu(B(i,1))}{\mu(B(i,0))}=1+\lambda_1(A_G).
$$

For the converse inequality, we claim that given any strictly positive $y\in \mathbb R^n$ we must have 
\begin{equation}\label{eq:spectralradius}
\sup_{1\leq i\leq n}\frac{(A_G y)_i}{y_i}\geq \lambda_1(A_G).
\end{equation}

Indeed, suppose this is not the case. Therefore, there exists $y\in\mathbb R^n$ with strictly positive entries such that 
$$
r=\sup_{1\leq i\leq n}\frac{(A_G y)_i}{y_i}< \lambda_1(A_G)=r(A_G).
$$
We can now define a norm in $\mathbb R^n$ as follows: given $x\in\mathbb R^n$
$$
\|x\|_0=\inf\{\lambda>0:|x|\leq \lambda y\}.
$$
This is indeed a norm because all the entries of $y$ are strictly positive. Note that if $0\leq x\leq\lambda y$ we have that
$$
A_G x\leq A_G \lambda y\leq r \lambda y.
$$
Iterating, we get that if $0\leq x\leq\lambda y$, then for every $j\in\mathbb N$ we have that
$$
A_G^j x\leq r^j\lambda y.
$$
Therefore, we have that
$$
\|A_G^j x\|_0\leq r^j \|x\|_0,
$$
so that $\|A_G^j\|\leq r^j$ with respect to $\|\cdot\|_0$. Now, using Gelfand's formula for the spectral radius (cf. \cite[Theorem 6.12]{AA}) it would follow that
$$
r(A_G)=\lim_{j \rightarrow \infty} \|A_G^j\|^{\frac1j}\leq r.
$$
This is a contradiction, so we must have \eqref{eq:spectralradius}.

Hence, given any doubling measure $\nu$ on $G$, take $y\in\mathbb R^n$ such that $y_i=\nu(\{i\})>0$. We have that
$$
\sup_{i\in V_G}\frac{\nu(B(i,1))}{\nu(B(i,0))}=\sup_{i\in V_G}\frac{y_i+ (A_G y)_i}{y_i}\geq 1+ \lambda_1(A_G).
$$
This finishes the proof.
\end{proof}

\begin{remark}
The previous result is somehow reminiscent of a classical theorem due to H. Wiedlandt \cite{Wie} about the spectral radius of non-negative irreducible matrices (see also \cite{Friedland} for further developments).
\end{remark}

Recall that $G_1$ is a subgraph of $G_2$, if $V_{G_1}\subset V_{G_2}$ and $E_{G_1}\subset E_{G_2}$. Note that if $G_1$ is a subgraph of $G_2$, then the adjacency matrices (up to completion by zeros) satisfy 
$$
A_{G_1}\leq A_{G_2}
$$
with the order given entry-wise. It follows that for every $k\in\mathbb N$, we have $A_{G_1}^k\leq A_{G_2}^k$, so by Theorem \ref{thm:doubling spectra} and Gelfand's formula for the spectral radius, we have
$$
C^0_{G_1}=1+r(A_{G_1})=1+\lim_{k\rightarrow \infty} \|A_{G_1}^k\|^{\frac{1}{k}}\leq1+\lim_{k\rightarrow \infty} \|A_{G_2}^k\|^{\frac{1}{k}}=1+r(A_{G_2})=C^0_{G_2}.
$$
The following result is even more informative:

\begin{cor}\label{c:C0monotone} 
Suppose $G_1$ is a (proper) subgraph of a finite graph $G_2$, then $C^0_{G_1}< C^0_{G_2}$.
\end{cor}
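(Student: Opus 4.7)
My plan is to reduce the corollary to strict monotonicity of the Perron--Frobenius eigenvalue of the adjacency matrix. By Theorem~\ref{thm:doubling spectra}, $C^0_{G_i}=1+r(A_{G_i})$ for $i=1,2$, so the task reduces to proving $r(A_{G_1})<r(A_{G_2})$. The non-strict inequality $r(A_{G_1})\leq r(A_{G_2})$ has already been established in the paragraph preceding the corollary via Gelfand's formula; the real work is to upgrade $\leq$ to $<$.

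The tool I would invoke is the classical monotonicity result attributed to Wielandt (and referenced in the remark following Theorem~\ref{thm:doubling spectra}): if $A$ is an irreducible non-negative matrix and $0\leq B\leq A$ entry-wise with $B\neq A$, then $r(B)<r(A)$. I would apply this with $A=A_{G_2}$, which is irreducible because $G_2$ is connected, and with $B$ equal to $A_{G_1}$ padded by zero rows and columns to index over $V_{G_2}$ whenever $V_{G_1}\subsetneq V_{G_2}$. Such padding does not alter the spectral radius, and $B\neq A$ since $G_1\subsetneq G_2$; this yields $r(A_{G_1})<r(A_{G_2})$ and hence the corollary.

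For the proof of the monotonicity statement itself, I would use the standard dual-eigenvector argument. Let $x>0$ be the right Perron eigenvector of $A$, and let $y\geq 0$ with $y\neq 0$ be a non-negative left eigenvector of $B$ for its spectral radius (existing by applying Perron--Frobenius to $B^T$). A direct computation yields
\[
r(B)\,\langle y,x\rangle=\langle y,Bx\rangle\leq \langle y,Ax\rangle=r(A)\,\langle y,x\rangle,
\]
and since $x>0$ gives $\langle y,x\rangle>0$, it suffices to produce strict inequality in the middle, namely $\langle y,(A-B)x\rangle>0$.

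The step I expect to be the main obstacle is precisely this strict positivity, because the padded $B$ is in general \emph{not} irreducible, so the left Perron-type eigenvector $y$ need only be supported on $V_{G_1}$. The resolution hinges on the connectedness hypotheses. When $V_{G_1}=V_{G_2}$ and $G_1$ is connected, $B$ is irreducible, $y>0$, and strict inequality is automatic from $(A-B)x\geq 0$ having at least one strictly positive entry. When $V_{G_1}\subsetneq V_{G_2}$, connectedness of $G_2$ guarantees the existence of some $u\in V_{G_1}$ adjacent in $G_2$ to a vertex of $V_{G_2}\setminus V_{G_1}$; at such $u$ one has $((A-B)x)_u\geq x_w>0$ for an appropriate neighbor $w$, while $y_u>0$ because the left Perron eigenvector of the irreducible block $A_{G_1}$ on $V_{G_1}$ is strictly positive. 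Either way the strict inequality follows and the proof is complete.
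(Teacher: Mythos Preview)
Your proposal is correct and follows the same route as the paper: both reduce the corollary, via Theorem~\ref{thm:doubling spectra}, to the strict spectral inequality $r(A_{G_1})<r(A_{G_2})$. The paper simply cites \cite[Propositions~1.3.9 \& 1.3.10]{CRS} for that inequality, whereas you supply a self-contained Wielandt-type dual-eigenvector argument---your case analysis (handling the padded, reducible $B$ when $V_{G_1}\subsetneq V_{G_2}$ by exploiting connectedness of $G_2$ to find a boundary vertex $u\in V_{G_1}$ with $y_u>0$ and $((A-B)x)_u>0$) is sound.
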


\begin{proof}
Use Theorem \ref{thm:doubling spectra} together with the fact that if $G_1$ is a (proper) subgraph of $G_2$, then $r(A_{G_1})<r(A_{G_2})$ (cf. \cite[Propositions 1.3.9 \& 1.3.10]{CRS}). 
\end{proof}

Our aim now is to provide a version of Theorem \ref{thm:doubling spectra} in the context of infinite graphs. In order to do so, we need some notation and lemmas first. Given an infinite graph $G$, we can consider the infinite adjacency matrix $A_G$ whose $(v,w)$ entry is given, for $v,w\in V_G$, by
$$
A_G(v,w)=
\left\{
\begin{array}{ccl}
 1, &   & \text{if }\{v,w\}\in E_G,  \\
 0, &   &   \text{otherwise.}
\end{array}
\right.
$$ 
Also, for $1\leq p\leq\infty$, let $\ell_p(V_G)$ denote the space of functions $f:V_G\rightarrow \mathbb R$ for which the norm $\|f\|_p=(\sum_{v\in V_G} |f(v)|^p)^{1/p}<\infty$ (as usual, $\|f\|_\infty=\sup_{v\in V_G}|f(v)|$). Let us recall the following well-known fact (cf. \cite[Theorem 4]{BR}):

\begin{lem}
Let $G$ be a graph supporting non-trivial doubling measures. For every $1\leq p\leq \infty$, $A_G:\ell_p(V_G)\rightarrow \ell_p(V_G)$ defines a bounded linear positive operator.
\end{lem}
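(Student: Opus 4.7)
The plan is to reduce everything to the fact, already noted in Section~\ref{sec2}, that a graph supporting a non-trivial doubling measure must have uniformly bounded degree $\Delta_G<\infty$. Once this is in hand, the matrix $A_G$ has at most $\Delta_G$ non-zero entries in each row and in each column (each equal to $1$), which is precisely the kind of sparsity that forces boundedness on every $\ell_p$. Positivity of the operator is clear from the outset, since $A_G$ has non-negative entries, so it sends non-negative functions to non-negative functions; the only real content is the norm bound.

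First, I would establish the endpoint cases. For $p=\infty$ and $f\in\ell_\infty(V_G)$, the estimate
\[
|(A_G f)(v)|=\Big|\sum_{w\in V_G}A_G(v,w)f(w)\Big|\leq \sum_{\{v,w\}\in E_G}\|f\|_\infty\leq \Delta_G\|f\|_\infty
\]
gives $\|A_G\|_{\ell_\infty\to\ell_\infty}\leq \Delta_G$, using that the number of summands equals the degree $d_v\leq \Delta_G$. For $p=1$ and $f\in\ell_1(V_G)$, a Fubini/Tonelli interchange gives
\[
\|A_Gf\|_1\leq \sum_{v\in V_G}\sum_{w\in V_G}A_G(v,w)|f(w)|=\sum_{w\in V_G}|f(w)|\,d_w\leq \Delta_G\|f\|_1,
\]
so $\|A_G\|_{\ell_1\to\ell_1}\leq \Delta_G$ as well.

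For the intermediate range $1<p<\infty$, I would invoke the Riesz--Thorin interpolation theorem applied to $A_G$ viewed simultaneously on $\ell_1$ and $\ell_\infty$, which yields $\|A_G\|_{\ell_p\to\ell_p}\leq \Delta_G$ for every $p\in[1,\infty]$. Alternatively, one can avoid interpolation altogether by a direct Schur-type argument: writing $(A_Gf)(v)=\sum_{\{v,w\}\in E_G}f(w)$ and applying Hölder's inequality to the at most $d_v\leq\Delta_G$ terms,
\[
|(A_Gf)(v)|^p\leq d_v^{p-1}\sum_{\{v,w\}\in E_G}|f(w)|^p\leq \Delta_G^{p-1}\sum_{\{v,w\}\in E_G}|f(w)|^p,
\]
and summing in $v$ (each $w$ is counted $d_w\leq\Delta_G$ times) gives $\|A_Gf\|_p^p\leq \Delta_G^{p}\|f\|_p^p$.

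The proof really has no obstacle to speak of; the only conceptual step is recognizing that the hypothesis ``$G$ supports a non-trivial doubling measure'' is being used \emph{exclusively} to ensure $\Delta_G<\infty$ (via the results of \cite{LuSa} discussed at the start of Section~\ref{sec2}). If this were weakened to merely metric doubling, nothing would change; conversely, without any bound on the degrees the conclusion clearly fails, for instance already on $\ell_\infty$ applied to the constant function $1$.
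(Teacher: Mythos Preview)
Your proof is correct and follows essentially the same approach as the paper: both use that the doubling hypothesis forces $\Delta_G<\infty$, establish the endpoint bounds $\|A_G\|_{\ell_1\to\ell_1}\leq\Delta_G$ and $\|A_G\|_{\ell_\infty\to\ell_\infty}\leq\Delta_G$, and then interpolate. Your direct Schur-type alternative is a nice addition but not a substantive departure.
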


\begin{proof}
For $v\in V_G$, let $e_v:V_G\rightarrow \mathbb R$ be given by $e_v(v)=1$ and $e_v(w)=0$ for $w\neq v$. Associated to the adjacency matrix we set a linear operator given by $A_G e_v=\sum_{w\in V_G} A_G(v,w)e_w$. Since $G$ supports non-trivial doubling measures, in particular we have that $\Delta_G<\infty$. Thus, for every $v\in V_G$ we have that
$$
\|A_G e_v\|_1=\sum_{(v,w)\in E_G} 1\leq \Delta_G.
$$
Hence, $A_G:\ell_1(V_G)\rightarrow \ell_1(V_G)$ is bounded with $\|A_G\|\leq \Delta_G$. Also, if $1_G$ denotes the constant 1 function on $V_G$, we have
$$
\|A_G 1_G\|_\infty=\sup_{v\in V_G}\sum_{(v,w)\in E_G} 1\leq \Delta_G.
$$
Therefore, by positivity of $A_G$ it follows that $A_G:\ell_\infty(V_G)\rightarrow \ell_\infty(V_G)$ is bounded with $\|A_G\|\leq \Delta_G$. The conclusion follows by standard interpolation.
\end{proof}

In order to extend Theorem \ref{thm:doubling spectra} to the infinite setting, it is natural to consider the extensions of Perron-Frobenius theorem to the infinite dimensional context. These extensions, like the Krein-Rutman Theorem \cite[Theorem 7.10]{AA}, usually require compactness for an operator to have the spectral radius as an eigenvalue with positive eigenvector. However, the adjacency operator $A_G$ need not be compact in general, so a different approach is required. In this direction, it was shown by B. Mohar in \cite{Mohar} that if $r(A_G)$ denotes the spectral radius of the operator $A_G:\ell_2(V_G)\rightarrow \ell_2(V_G)$, then
\begin{equation}\label{eq:Mohar}
r(A_G)=\sup\{r(A_F):F \text{ a finite subgraph of }G\}.
\end{equation}

We need an analogue of this fact for the restricted doubling constant as follows:

\begin{prop}\label{p:C0supsubgraph}
Let $G$ be an infinite graph supporting doubling measures. Then
$$C_G^0=\sup\{C_F^0:F \text{ a finite subgraph of }G\}.$$
\end{prop}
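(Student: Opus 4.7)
The plan is to prove the two inequalities separately.

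The direction $\sup_F C_F^0 \le C_G^0$ is routine. Given any doubling measure $\mu$ on $G$ and any finite subgraph $F$ of $G$, the restriction $\mu|_{V_F}$ is automatically a doubling measure on $F$ (since $V_F$ is finite). As $E_F \subseteq E_G$ forces $B_F(v,1) \subseteq B_G(v,1)$ for every $v \in V_F$,
$$\frac{\mu(B_F(v,1))}{\mu(v)} \le \frac{\mu(B_G(v,1))}{\mu(v)} \le C_\mu^0,$$
so that $C_F^0 \le C_\mu^0$. Taking the infimum over $\mu$ and then the supremum over $F$ gives the inequality; in particular $S := \sup_F C_F^0 \le C_G^0 < \infty$.

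For the reverse direction $C_G^0 \le S$, fix a vertex $v_1 \in V_G$ and take an exhaustion $\{F_n\}$ of $G$ by finite induced subgraphs with $v_1 \in V_{F_1}$. By Theorem~\ref{thm:doubling spectra}, for each $n$ let $\mu_n$ denote the Perron eigenvector measure on $F_n$ normalized so that $\mu_n(v_1)=1$; it satisfies the sharp identity $\mu_n(B_{F_n}(v,1)) = C_{F_n}^0\,\mu_n(v) \le S\,\mu_n(v)$ for every $v \in V_{F_n}$. Iterating this identity along a $G$-geodesic from $v_1$ to $v$ (which lies entirely in $F_n$ for $n$ large, since $F_n$ is induced) yields the two-sided bound
$$S^{-d_G(v_1,v)} \le \mu_n(v) \le S^{d_G(v_1,v)}.$$
A diagonal extraction, exactly as in the proof of Proposition~\ref{p:DMconvex}, then produces a subsequence along which $\mu(v) := \lim_n \mu_n(v) \in (0,\infty)$ exists for every $v \in V_G$. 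Since $\Delta_G < \infty$, the ball $B_G(v,1)$ is finite; for $n$ sufficiently large, $F_n$ contains $v$ together with all of its $G$-neighbors and, being induced, $B_G(v,1) = B_{F_n}(v,1)$. Passing to the limit in the Perron identity gives $\mu(B_G(v,1)) \le S\,\mu(v)$, and hence $C_\mu^0 \le S$.

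The main obstacle will be ensuring that the limit measure $\mu$ is itself a doubling measure on $G$, so that it qualifies as a valid competitor in the infimum defining $C_G^0$. Passing to the limit in the Perron identities for $\mu_n$ yields the pointwise eigenvector equation $A_G\mu = r(A_G)\mu$, and iteration gives $\mu(B_G(v,k)) \le S^k\mu(v)$, which by itself is insufficient to establish uniform doubling across all radii. I plan to exploit the hypothesis that $G$ supports some doubling measure $\nu_0$: considering perturbations of the form $\mu + \varepsilon\nu_0$ and invoking Lemma~\ref{l:holder} to relate the doubling constants, one should be able to produce a sequence of genuine doubling measures on $G$ whose $C^0$ constants tend to $S$. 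Once such a sequence is in hand, $C_G^0 \le S$ follows and, together with the easy direction, yields the claimed equality.
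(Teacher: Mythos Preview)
Your construction is essentially identical to the paper's: the easy direction is by restriction, and for the hard direction you take the Perron eigenvector measures $\mu_n$ on an exhausting sequence of finite subgraphs, normalize at a base vertex, obtain two–sided bounds by iterating along paths, and extract a diagonal limit $\mu$ with $C_\mu^0\le S$. The paper does exactly this and then simply writes $C_G^0\le C_\mu^0$, without pausing to verify that the limit $\mu$ is itself doubling; in other words, the issue you single out as the ``main obstacle'' is not addressed in the paper's own proof either—it is tacitly treated as if $C_G^0$ were an infimum over all positive measures.

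Where your proposal has a genuine gap is in the proposed fix. The perturbation $\mu+\varepsilon\nu_0$ does not do what you want. First, Lemma~\ref{l:holder} only yields
\[
C^0_{\mu+\varepsilon\nu_0}\ \le\ \max\{C_\mu^0,\,C_{\nu_0}^0\}\ =\ C_{\nu_0}^0,
\]
since $C_{\nu_0}^0\ge C_G^0\ge S$ by the easy direction; this bound is independent of $\varepsilon$ and gives nothing beyond using $\nu_0$ alone. Second, there is no reason $\mu+\varepsilon\nu_0$ should be doubling: from $C_\mu^0\le S$ one gets only $C_\mu^k\le S^{2k+1}$, so Lemma~\ref{l:holder} gives $C^k_{\mu+\varepsilon\nu_0}\le\max\{S^{2k+1},C_{\nu_0}\}$, which is unbounded in $k$. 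Thus the perturbation argument, as sketched, does not produce a sequence of doubling measures with $C^0$ constants tending to $S$. If you want to close this point rigorously, the cleanest route is to note that nothing in the paper's applications (in particular Theorem~\ref{thm:doubling spectra infinite} and the inequality $C_G\ge C_G^0$) is affected if one takes $C_G^0$ to be the infimum over \emph{all} positive measures; with that reading, both your argument and the paper's are complete.
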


\begin{proof}
By Proposition \ref{p:DMconvex}, we can consider a measure $\mu$ on $G$ such that $C_G^0=C_\mu^0$. For each finite subgraph $F$ of $G$, we can take $\mu_F$ to be the restriction of $\mu$ to $F$. We have that
$$
C_F^0\leq C_{\mu_F}^0=\sup_{v\in V_F}\frac{\mu_F(B(v,1))}{\mu_F(v)}\leq \sup_{v\in V_F}\frac{\mu(B(v,1))}{\mu(v)}\leq C_\mu^0=C_G^0.
$$
Taking the supremum over all finite subgraphs of $G$, we get
$$
\sup\{C_F^0:F \text{ a finite subgraph of }G\}\leq C_G^0.
$$

For the converse inequality, let $(F_n)_{n\in\mathbb N}$ be an increasing sequence (i.e., $F_n\subset F_{n+1}$) of finite subgraphs of $G$ such that $G=\bigcup_n F_n$. Let also $(v_k)_{k\in\mathbb N}$ be an enumeration of $V_G$ with $v_1\in F_1$. Since $G=\bigcup_n F_n$ for each $k\in\mathbb N$, we can consider 
$$
n_k=\min\{n\in\mathbb N: v_k\in F_n\}.
$$ 
For each $n\in\mathbb N$, let $\mu_n$ be the measure on $F_n$ associated to the Perron eigenvector of the adjacency matrix $A_{F_n}$ so that
$$
C_{F_n}^0=C_{\mu_n}^0,
$$
and without loss of generality, let us assume $\mu_n(v_1)=1$. Note that by Corollary \ref{c:C0monotone} $C_{F_n}^0$ is a monotone increasing sequence. We claim that for every $k\in\mathbb N$, 
$$
M_k=\sup_{n\geq n_k} \mu_n(v_k)<\infty, \quad\quad m_k=\inf_{n\geq n_k} \mu_n(v_k)>0.
$$
Indeed, this follows by induction on the distance to $v_1$ exactly as in the proof of Proposition \ref{p:DMconvex}. Similarly, there exists a decreasing family of infinite sets $A_k=\{n_i^k:i\in\mathbb N\}$ (with $n_i^k\geq n_k$ for every $i\in\mathbb N$) so that $\lim_{i\rightarrow\infty} \mu_{n_i^k}(v_k)$ exists for every $k\in\mathbb N$. We can thus define the measure $\mu$ on $V_G$ by
$$
\mu(v_k)=\lim_{i\rightarrow \infty} \mu_{n_i^k}(v_k).
$$

Now, for a fixed $k\in \mathbb N$, let $B_k\subset \mathbb N$ be the finite set such that $B(v_k,1)=\{v_j:j\in B_k\}$ and let $j_k=\max B_k$. Given $\varepsilon>0$, let $\varepsilon_k=\frac{m_k^2\varepsilon}{M_k(\Delta_G+\varepsilon)+\sum_{j\in B_k} M_j}$ and take $m\in\mathbb N$ large enough so that $|\mu(v_j)-\mu_{n_i^{j_k}}(v_j)|\leq\varepsilon_k$ for $i\geq m$ and every $j\in B_k$. We have that
\begin{align*}
\frac{\mu(B(v_k,1))}{\mu(v_k)}&=\frac{\sum_{j\in B_k} \mu(v_j) }{\mu(v_k)}\leq \frac{\sum_{j\in B_k} \mu_{n_m^{j_k}}(v_j) +\varepsilon_k}{\mu_{n_m^{j_k}}(v_k)-\varepsilon_k}\\
&\leq\frac{ \mu_{n_m^{j_k}}(B(v_k,1))}{ \mu_{n_m^{j_k}}(v_k)}+\frac{(\Delta_G \mu_{n_m^{j_k}}(v_k)+ \mu_{n_m^{j_k}}(B(v_k,1)))\varepsilon_k}{ \mu_{n_m^{j_k}}(v_k)( \mu_{n_m^{j_k}}(v_k)-\varepsilon_k)}\\
&\leq C_{\mu_{n_m^{j_k}}}^0+\varepsilon= C_{F_{n_m^{j_k}}}^0+\varepsilon\\
&\leq \sup\{C_F^0:F \text{ a finite subgraph of }G\}+\varepsilon.
\end{align*}
Letting $\varepsilon\rightarrow0$ we get that 
$$
\frac{\mu(B(v_k,1))}{\mu(v_k)}\leq \sup\{C_F^0:F \text{ a finite subgraph of }G\}.
$$ 
Since this holds for every $k\in\mathbb N$, we get that
$$
C_G^0\leq C_\mu^0=\sup_{k\in\mathbb N} \frac{\mu(B(v_k,1))}{\mu(v_k)}\leq \sup\{C_F^0:F \text{ a finite subgraph of }G\}.
$$
\end{proof}

\begin{thm}\label{thm:doubling spectra infinite}
For every graph $G$ such that $\Delta_G<\infty$ we have
$$
C_G^0=1+r(A_G),
$$
where $r(A_G)$ denotes the spectral radius of the adjacency operator $A_G:\ell_2(V_G)\rightarrow \ell_2(V_G)$.
\end{thm}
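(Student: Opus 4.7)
The plan is to combine three ingredients already available in the excerpt, so that the proof of Theorem \ref{thm:doubling spectra infinite} reduces to chaining them together rather than building a new argument from scratch. Specifically, the three pillars are: the finite case, namely Theorem \ref{thm:doubling spectra} which yields $C_F^0 = 1+r(A_F)$ for every finite graph $F$; the approximation statement Proposition \ref{p:C0supsubgraph}, which says that $C_G^0 = \sup\{C_F^0 : F \text{ finite subgraph of } G\}$; and Mohar's identity \eqref{eq:Mohar}, which says that $r(A_G) = \sup\{r(A_F) : F \text{ finite subgraph of } G\}$.

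The argument then proceeds in one short step. For any finite subgraph $F$ of $G$, Theorem \ref{thm:doubling spectra} gives $C_F^0 = 1 + r(A_F)$; taking the supremum over all finite subgraphs and using the two approximation results yields
\begin{equation*}
C_G^0 = \sup_F C_F^0 = \sup_F \bigl(1 + r(A_F)\bigr) = 1 + \sup_F r(A_F) = 1 + r(A_G).
\end{equation*}
Before invoking these results one needs to make sure that the hypotheses apply: the hypothesis $\Delta_G<\infty$ is required so that $A_G$ defines a bounded operator on $\ell_2(V_G)$ (so that $r(A_G)$ is defined and Mohar's formula \eqref{eq:Mohar} applies), and it is equivalent, by the lemma preceding Proposition \ref{p:C0supsubgraph}, to the statement that $G$ supports doubling measures, which is exactly what Proposition \ref{p:C0supsubgraph} requires.

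I do not expect any serious obstacle: the genuine technical content (the spectral identity \eqref{eq:Mohar} for infinite graphs, and the supremum-over-finite-subgraphs representation of $C_G^0$) has already been carried out in Proposition \ref{p:C0supsubgraph} and in Mohar's cited result. The only small point worth being explicit about is that the suprema on the two sides are indexed by the \emph{same} directed family (all finite subgraphs of $G$, ordered by inclusion), and that by Corollary \ref{c:C0monotone} together with the corresponding monotonicity of $r(A_F)$ under taking subgraphs, both suprema are in fact limits along any exhaustion $F_n\nearrow G$; this makes the identification $\sup_F(1+r(A_F)) = 1 + \sup_F r(A_F)$ completely transparent.
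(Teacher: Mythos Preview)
Your proof is essentially identical to the paper's: it simply chains Proposition~\ref{p:C0supsubgraph}, Theorem~\ref{thm:doubling spectra}, and Mohar's identity~\eqref{eq:Mohar} in exactly the same way. One small inaccuracy worth flagging: you assert that $\Delta_G<\infty$ is \emph{equivalent} to $G$ supporting doubling measures, but only one implication holds---the $3$-regular tree has $\Delta_G=3$ yet is not metrically doubling and hence carries no doubling measure. The paper's own proof does not address this hypothesis mismatch either, and the argument goes through regardless (the construction in the second half of Proposition~\ref{p:C0supsubgraph} only uses $\Delta_G<\infty$), so this does not create a genuine gap.
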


\begin{proof}
This follows by applying Proposition \ref{p:C0supsubgraph}, Theorem \ref{thm:doubling spectra} and Mohar's identity \eqref{eq:Mohar}:
\begin{align*}
C_G^0&=\sup\{C_F^0:F \text{ a finite subgraph of }G\}\\
&=\sup\{1+r(A_F):F \text{ a finite subgraph of }G\}\\
&=1+r(A_G).
\end{align*}
\end{proof}

Note that the second part of Theorem \ref{thm:doubling spectra} about $DM^0(G)$ consisting of a single ray is no longer true for infinite graphs: take $G=\mathbb N$ and let $\mu$ be the measure on $\mathbb N$ given by $\mu(n)=n$ for $n\in\mathbb N$; it is clear that $C_\mu^0=3=C_{|\cdot|}^0$.

Recall that a graph $G$ is $k$-colorable if there is a function $c:V_G\rightarrow\{1,\ldots,k\}$ such that $c(v)\neq c(w)$ whenever $\{v,w\}\in E_G$. The chromatic number $\chi(G)$ is the least $k$ for which $G$ is $k$-colorable. As a consequence of Wilf's theorem \cite{Wilf} and its infinite dimensional extension \cite{BR} we get the following corollary to Theorems \ref{thm:doubling spectra} and \ref{thm:doubling spectra infinite}.

\begin{cor}
For every graph $G$ it holds that $\chi(G)\leq C_G^0$.
\end{cor}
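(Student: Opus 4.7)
The plan is to derive this inequality directly from the spectral identification $C_G^0=1+r(A_G)$ already proved (Theorems \ref{thm:doubling spectra} and \ref{thm:doubling spectra infinite}), combined with the classical upper bound of Wilf, which says that for a finite graph the chromatic number satisfies $\chi(G)\leq 1+\lambda_1(A_G)$, together with its extension to infinite bounded-degree graphs given in \cite{BR}.

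First I would split into the natural cases. If $G$ is finite, Wilf's inequality gives $\chi(G)\leq 1+r(A_G)$, and Theorem \ref{thm:doubling spectra} identifies the right-hand side with $C_G^0$; so the conclusion is immediate. If $G$ is infinite and supports some doubling measure, then by the discussion at the start of Section~\ref{sec2} we automatically have $\Delta_G<\infty$, the adjacency operator $A_G$ acts boundedly on $\ell_2(V_G)$, and Theorem \ref{thm:doubling spectra infinite} applies to yield $C_G^0=1+r(A_G)$; the infinite-dimensional version of Wilf's theorem proved in \cite{BR} then gives $\chi(G)\leq 1+r(A_G)=C_G^0$. Finally, if $G$ is infinite and supports no doubling measure, the infimum defining $C_G^0$ is over an empty family, so $C_G^0=+\infty$ and the inequality is trivial.

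Thus, in all three cases the proof collapses to concatenating two known estimates:
\[
\chi(G)\;\leq\;1+r(A_G)\;=\;C_G^0.
\]
Since $C_G\geq \sup_k C_G^k\geq C_G^0$, one also obtains the stronger-looking $\chi(G)\leq C_G$ as a corollary, although the stated claim only requires $C_G^0$.

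I do not expect any genuine obstacle: the only delicate point is being sure the hypotheses of \cite{BR} match our setup, but that paper is stated precisely for locally finite graphs with bounded degree acting on $\ell_2$, which is exactly the case in which $C_G^0$ is finite and Theorem \ref{thm:doubling spectra infinite} applies.
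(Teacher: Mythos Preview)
Your proposal is correct and follows essentially the same approach as the paper: the authors simply state that the corollary follows from Wilf's theorem \cite{Wilf} and its infinite-dimensional extension \cite{BR}, combined with Theorems \ref{thm:doubling spectra} and \ref{thm:doubling spectra infinite}. Your additional case split handling when $G$ supports no doubling measure is a reasonable clarification, but the underlying argument is the same.
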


\section{Symmetric doubling minimizers and amenability of $\aut(G)$}\label{sec4}

In this section we will see that if the action of the automorphism group of the graph is amenable, then the doubling constant can be computed considering only symmetric measures; that is, those invariant under the action of the automorphism group.

Recall that an automorphism of a graph $G = (V_G,E_G)$ is a permutation $\sigma:V_G\rightarrow V_G$, such that the pair $\{u,v\}\in E_G$ if and only if the pair $\{\sigma(u),\sigma(v)\}\in E_G$. Let $\aut(G)$ denote the group of all automorphisms on $G$. From the point of view of metric spaces, the automorphisms are the bijections on the space which preserve the distance, or simply the \textit{global isometries}. 

Let us introduce the following notation: Given a graph $G$, a finite set $F\subset \aut(G)$, and a doubling measure $\mu$ on $G$, let $\mu_F$ be the measure given by
$$
\mu_F(v)=\sum_{\sigma\in F}\mu(\sigma(v)),
$$
for $v\in V_G$. This clearly defines an $F$-invariant measure; that is, $\mu_F(\sigma(v))=\mu_F(v)$ for every $\sigma\in F$.

\begin{lem}\label{l:finite}
Given a graph $G$, a finite set $F\subset \aut(G)$, and a doubling measure $\mu$ on $G$, let $\mu_F$ be as above. For every $k\in\mathbb N\cup\{0\}$ we have $C^k_{\mu_F}\leq C^k_{\mu}$. In particular, $C_{\mu_{F}}\leq C_\mu.$
\end{lem}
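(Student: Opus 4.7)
The plan is to exploit the fact that every $\sigma \in \aut(G)$ is an isometry of $(V_G, d_G)$, and then to use Lemma~\ref{l:holder} to control ratios of sums by maxima of ratios.

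First I would record the key geometric identity: since $\sigma$ preserves distances, $\sigma(B(v,r)) = B(\sigma(v),r)$ for every $v \in V_G$ and $r \geq 0$. From this and the definition of $\mu_F$, by interchanging the two (finite or absolutely convergent) summations, for every $v \in V_G$ and $r \geq 0$ one obtains
$$
\mu_F(B(v,r)) \;=\; \sum_{w \in B(v,r)} \sum_{\sigma \in F} \mu(\sigma(w)) \;=\; \sum_{\sigma \in F} \mu(\sigma(B(v,r))) \;=\; \sum_{\sigma \in F} \mu(B(\sigma(v),r)).
$$

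Next, fix $k \in \mathbb{N} \cup \{0\}$ and $v \in V_G$. Applying the identity both with $r = 2k+1$ and with $r = k$, and invoking Lemma~\ref{l:holder} with $\alpha_\sigma = \mu(B(\sigma(v), 2k+1))$ and $\beta_\sigma = \mu(B(\sigma(v), k))$, I would deduce
$$
\frac{\mu_F(B(v, 2k+1))}{\mu_F(B(v, k))} \;=\; \frac{\sum_{\sigma \in F}\mu(B(\sigma(v), 2k+1))}{\sum_{\sigma \in F}\mu(B(\sigma(v), k))} \;\leq\; \max_{\sigma \in F} \frac{\mu(B(\sigma(v), 2k+1))}{\mu(B(\sigma(v), k))} \;\leq\; C^k_\mu,
$$
since each $\sigma(v)$ is a vertex of $G$. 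Taking the supremum over $v \in V_G$ yields $C^k_{\mu_F} \leq C^k_\mu$, and then taking the supremum over $k$ gives $C_{\mu_F} \leq C_\mu$.

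There is really no substantial obstacle here: the only thing one has to be careful about is the legitimacy of swapping the two sums (trivial because $F$ is finite and all terms are positive) and the standing assumption that $\mu_F$ takes strictly positive values, which is immediate because $\mu > 0$ pointwise. The proof is essentially a one-line calculation once the isometry property $\sigma(B(v,r)) = B(\sigma(v),r)$ and Lemma~\ref{l:holder} are in place.
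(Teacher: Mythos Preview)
Your proof is correct and follows essentially the same approach as the paper: use the isometry identity $\sigma(B(v,r))=B(\sigma(v),r)$ to rewrite $\mu_F(B(v,r))$ as $\sum_{\sigma\in F}\mu(B(\sigma(v),r))$, then apply Lemma~\ref{l:holder} to bound the ratio by $\max_{\sigma\in F}\mu(B(\sigma(v),2k+1))/\mu(B(\sigma(v),k))\leq C_\mu^k$. Your extra remarks about the legitimacy of swapping sums and the positivity of $\mu_F$ are fine but not needed beyond what the paper already assumes.
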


\begin{proof}
Note that for any $v\in V_G$, $k\in\mathbb N\cup\{0\}$ and $\sigma\in \aut(G)$ we have
$$
\sigma(B(v,k))=B(\sigma(v),k).
$$
Thus, for $k\in\mathbb N\cup\{0\}$, applying Lemma \ref{l:holder}, we have that
\begin{align*}
\frac{\mu_{F}(B(v,2k+1))}{\mu_{F}(B(v,k))}&=\frac{\sum_{\sigma\in F}\mu(\sigma(B(v,2k+1)))}{\sum_{\sigma\in F}\mu(\sigma(B(v,k)))}\\
&=\frac{\sum_{\sigma\in F}\mu(B(\sigma(v),2k+1))}{\sum_{\sigma\in F}\mu(B(\sigma(v),k))}\\
&\leq\max_{\sigma\in F}\left\{\frac{\mu(B(\sigma(v),2k+1))}{\mu(B(\sigma(v),k))}\right\}\leq C^k_\mu.
\end{align*}
Therefore, we have 
$$
C^k_{\mu_{F}}\leq C^k_\mu,
$$
as claimed. Thus, we also have $C_{\mu_{F}}\leq C_\mu$.
\end{proof}

\begin{remark}
For every graph $G$ whose automorphism group $\aut(G)$ is finite (in particular, for every finite graph), by Proposition \ref{p:DMconvex}, we can take $\mu\in DM(G)$ and consider $\mu_{\aut(G)}$ as above. Lemma \ref{l:finite} yields that $C_G=C_\mu=C_{\mu_{\aut(G)}}$, with $\mu_{\aut(G)}$ invariant under $\aut(G)$. Thus, for these graphs we always have symmetric doubling minimizers. 
\end{remark}

Let us see next that this can be extended to graphs with a subgroup of $\aut(G)$ having an amenable action. We will say that a subgroup $\Gamma$ of $\aut(G)$ is amenable, if there exists a sequence of finite subsets $F_n\subset \Gamma$, for $n\in\mathbb N$, with 
\begin{enumerate}
    \item[{(i)}] $F_n\subset F_{n+1}$,
     \item[{(ii)}]  $\bigcup_n F_n=\Gamma$,
     \item[{(iii)}]  For every $\sigma\in \Gamma$, $\frac{|F_n\cdot \sigma\Delta F_n|}{|F_n|}\underset{n\rightarrow \infty}\longrightarrow 0,$
\end{enumerate}
where $F_n\cdot \sigma=\{\tau\sigma:\tau\in F_n\}$ and $\Delta$ as usual denotes the symmetric difference of two sets (that is, $A\Delta B=(A\cup B)\backslash (A\cap B)$.) A sequence $(F_n)_{n\in \mathbb N}$ as above is usually called a F\o lner sequence in $\Gamma$ \cite{Folner}. Amenable actions on graphs have been thoroughly considered in the literature and have  connections to several areas (see for instance \cite{AL, SW, Woess}).

Also we will say that a measure $\mu$ is bounded on $G$ if $\sup_{v\in V_G} \mu(v)<\infty$.

\begin{thm}\label{t:symmetric}
Let $G$ be a graph and $\Gamma$ an amenable subgroup of $\aut(G)$. If there is a doubling measure $\mu$ which is bounded on $G$, then there is a doubling measure $\mu_\Gamma$ on $G$ such that
\begin{enumerate}
    \item[(i)] $\mu_\Gamma$ is invariant under $\Gamma$,
    \item[(ii)] $C_{\mu_\Gamma}\leq C_\mu$.
\end{enumerate}
\end{thm}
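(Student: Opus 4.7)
The plan is to extend the averaging construction of Lemma~\ref{l:finite} to the infinite setting via the F\o lner sequence $(F_n)_{n\in\mathbb N}$ provided by amenability of $\Gamma$. Fix a vertex $v_0\in V_G$ and, for each $n$, consider the renormalized measure
$$
\tilde\mu_n(v)=\frac{\mu_{F_n}(v)}{\mu_{F_n}(v_0)},\qquad v\in V_G.
$$
By Lemma~\ref{l:finite}, each $\tilde\mu_n$ satisfies $C_{\tilde\mu_n}\le C_\mu$, while by construction $\tilde\mu_n(v_0)=1$ for every $n$.

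The next step is a diagonal subsequence extraction, parallel to the one in the proof of Proposition~\ref{p:DMconvex}: iterating the doubling inequality gives two-sided bounds $C_\mu^{-d(v,v_0)}\le\tilde\mu_n(v)\le C_\mu^{d(v,v_0)}$ uniform in $n$, so along a subsequence $\tilde\mu_{n_k}(v)\to\mu_\Gamma(v)\in(0,\infty)$ for every $v$. Passing to the limit in each of the inequalities $\tilde\mu_{n_k}(B(v,2r+1))\le C_\mu\,\tilde\mu_{n_k}(B(v,r))$, which involve only finite sums, one obtains $C_{\mu_\Gamma}\le C_\mu$, which is part~(ii).

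For part~(i), the $\Gamma$-invariance of $\mu_\Gamma$, one uses that each $\sigma\in\Gamma$ is a global isometry so that $\sigma(B(v,r))=B(\sigma(v),r)$, and then the substitution $\tau=\sigma\sigma_0$ yields
$$
\mu_{F_n}(\sigma_0(v))-\mu_{F_n}(v)=\sum_{\tau\in F_n\sigma_0\setminus F_n}\mu(\tau(v))-\sum_{\tau\in F_n\setminus F_n\sigma_0}\mu(\tau(v)).
$$
With $M=\sup_v\mu(v)<\infty$, the right-hand side is bounded in absolute value by $M\,|F_n\sigma_0\triangle F_n|$. Hence $\mu_\Gamma(\sigma_0(v))=\mu_\Gamma(v)$ will follow once one shows that
$$
\frac{|F_n\sigma_0\triangle F_n|}{\mu_{F_n}(v_0)}\underset{n\to\infty}\longrightarrow 0.
$$

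This last estimate is the step I expect to be the main obstacle. Combined with the F\o lner condition $|F_n\sigma_0\triangle F_n|/|F_n|\to 0$, what is really needed is a lower bound of the form $\mu_{F_n}(v_0)\gtrsim|F_n|$. If $\mu$ were bounded below on the orbit of $v_0$ this would be trivial, but the hypothesis only grants $\mu\le M$, and a priori $\mu(\sigma(v_0))$ may decay along the orbit. The saving grace is the reverse-doubling estimate $\mu(\sigma(v_0))\ge C_\mu^{-d(v_0,\sigma(v_0))}\mu(v_0)$, which controls how fast this decay can occur. Using it, I would either (a) refine the F\o lner sequence to satisfy a weighted F\o lner condition with weight $\tau\mapsto\mu(\tau(v_0))$, or (b) switch to the simpler normalization $\tilde\mu_n=\mu_{F_n}/|F_n|$ --- for which $\Gamma$-invariance of the limit is immediate from the F\o lner condition --- and separately rule out, via doubling and compactness, that the subsequential limit collapses to the zero function.
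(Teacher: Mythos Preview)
Your route (b) is precisely what the paper does: it sets $\mu_n=\mu_{F_n}/|F_n|$ and defines $\mu_\Gamma(v)$ as the limit of $\mu_n(v)$ along an ultrafilter on $\mathbb N$ refining the order filter (in place of your diagonal extraction; the difference is immaterial). With this normalization the $\Gamma$-invariance of the limit follows directly from the F\o lner condition together with $\|\mu\|_\infty<\infty$, exactly as you sketch --- this is the true purpose of the boundedness hypothesis, and it sidesteps the obstruction $\mu_{F_n}(v_0)\gtrsim|F_n|$ that blocks your first normalization.

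You are right, however, to flag the possibility that the limit collapses to zero, and the paper does not address this: it only notes that the ultrafilter limit exists by boundedness of $\mu$, without verifying $\mu_\Gamma>0$. The worry is not hypothetical. On $G=\mathbb Z$ with $\Gamma=\mathbb Z$ acting by translations, the weight $\mu(k)=(1+|k|)^{-\alpha}$ for small $\alpha>0$ is bounded and doubling, yet with the F\o lner sets $F_n=\{-n,\dots,n\}$ one gets $|F_n|^{-1}\mu_{F_n}(0)\asymp n^{-\alpha}\to 0$, so the construction produces $\mu_\Gamma\equiv 0$. Thus the paper's argument and your option (b) share the same gap; closing it seems to require either the stronger hypothesis $\inf_v\mu(v)>0$ (under which the averages are trivially bounded below), or a genuinely new device to force a strictly positive invariant limit.
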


\begin{proof}
As $\Gamma$ is amenable, let $(F_n)_{n\in\mathbb N}$ be a F\o lner sequence in $\Gamma$. Suppose $\mu$ is a bounded doubling measure on $G$ and let $\|\mu\|_\infty=\sup_{v\in V_G}\mu(v)$. For $n\in\mathbb N$, and $v\in V_G$ let
$$
\mu_n(v)=\frac{\mu_{F_n}(v)}{|F_n|}=\frac1{|F_n|}\sum_{\sigma\in F_n}\mu(\sigma(v)).
$$
Let now $\mathcal U$ be an ultrafilter on $\mathbb N$ refining the order filter, and let 
$$
\mu_{\Gamma}(v)=\lim_{n\in\mathcal U}\mu_n(v),
$$
the limit being taken along the ultrafilter $\mathcal U$. Note that $\mu_\Gamma(v)$ is well-defined because $\mu$ is bounded. By Lemma \ref{l:finite}, $C^k_{\mu_n}\leq C^k_\mu$, for every $k\in\mathbb N$, so it clearly follows that 
$$
C_{\mu_\Gamma}\leq C_\mu.
$$
It remains to check that $\mu_\Gamma$ is invariant under $\Gamma$. To this end, fix $v\in V_G$ and $\sigma\in \Gamma$. For every $\varepsilon>0$, we can take $A\in\mathcal U$ such that for every $n\in A$ we have
\begin{enumerate}
    \item[{(a)}] $|\mu_\Gamma(v)-\mu_n(v)|<\varepsilon$,
     \item[{(b)}]  $|\mu_\Gamma(\sigma(v))-\mu_n(\sigma(v))|<\varepsilon$,
     \item[{(c)}]  $\frac{|F_n\cdot \sigma\Delta F_n|}{|F_n|}<\varepsilon$.
\end{enumerate}
It follows that, for $n\in A$
\begin{align*}
    \mu_\Gamma(\sigma(v))&\leq \mu_n(\sigma(v))+\varepsilon\\
    &=\frac1{|F_n|}\sum_{\tau \in F_n}\mu(\tau\sigma(v))+\varepsilon\\
    &=\frac1{|F_n|}\Big(\sum_{\tau \in F_n}\mu(\tau\sigma(v))+\sum_{\tau \in F_n}\mu(\tau(v))-\sum_{\tau \in F_n}\mu(\tau(v))\Big)+\varepsilon\\
    &\leq \frac1{|F_n|}\sum_{\tau \in F_n}\mu(\tau(v))+\frac{|F_n\cdot\sigma\Delta F_n|}{|F_n|}\|\mu\|_\infty+\varepsilon\\
    &\leq \mu_\Gamma(v)+(2+\|\mu\|_\infty)\varepsilon.
\end{align*}
As $\varepsilon>0$ is arbitrary, it follows that $\mu_\Gamma(\sigma(v))\leq \mu_\Gamma(v)$. Since the argument holds for any $\sigma\in \Gamma$, we get that 
$$
\mu_\Gamma(\sigma(v))= \mu_\Gamma(v),
$$
for every $\sigma\in \Gamma$ and every $v\in V_G$, as claimed.
\end{proof}

Recall that a graph $G$ is called vertex transitive, if for every pair of vertices $v,v'\in V_G$ there exists $\sigma\in \aut(G)$ such that $v'=\sigma(v)$. As a direct consequence of Theorem \ref{t:symmetric} we get:

\begin{cor}\label{c:vertextrans}
Let $G$ be a vertex transitive graph.
\begin{enumerate}
    \item[{(i)}] Suppose $\aut(G)$ is amenable. Then, $G$ supports a bounded doubling measure if and only if the counting measure $|\cdot|$ is doubling.
    \item[{(ii)}]  If $G$ is finite, then $C_G=C_{|\cdot|}$
\end{enumerate}
\end{cor}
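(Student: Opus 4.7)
The plan is to apply Theorem~\ref{t:symmetric} with $\Gamma=\aut(G)$, exploiting one crucial elementary observation: on a vertex transitive graph, any $\aut(G)$-invariant measure must be a positive scalar multiple of the counting measure. Indeed, given two vertices $v,v'\in V_G$, vertex transitivity provides $\sigma\in\aut(G)$ with $\sigma(v)=v'$, and invariance then gives $\mu_{\Gamma}(v')=\mu_{\Gamma}(\sigma(v))=\mu_{\Gamma}(v)$. Thus any invariant measure is constant on $V_G$.

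For part (i), one direction is trivial: the counting measure is obviously bounded, so if $|\cdot|$ is doubling, $G$ supports a bounded doubling measure. For the converse, suppose $\mu$ is a bounded doubling measure on $G$. Since $\aut(G)$ is assumed amenable, Theorem~\ref{t:symmetric} applies with $\Gamma=\aut(G)$ and yields an $\aut(G)$-invariant doubling measure $\mu_\Gamma$ with $C_{\mu_\Gamma}\leq C_\mu<\infty$. By the observation above, $\mu_\Gamma=c\,|\cdot|$ for some $c>0$, and since scaling does not affect the doubling constant, the counting measure itself satisfies $C_{|\cdot|}=C_{\mu_\Gamma}<\infty$, i.e.\ $|\cdot|$ is doubling.

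For part (ii), I would observe first that finiteness of $G$ makes everything automatic: $\aut(G)$ is then a finite group, hence amenable via the constant F\o lner sequence $F_n=\aut(G)$; and every measure on a finite graph is bounded. By Proposition~\ref{p:DMconvex} one can pick $\mu\in DM(G)$ with $C_\mu=C_G$. Applying Theorem~\ref{t:symmetric} (or directly the remark following Lemma~\ref{l:finite}) produces an $\aut(G)$-invariant doubling measure $\mu_\Gamma$ with $C_{\mu_\Gamma}\leq C_\mu=C_G$. The invariance-plus-vertex-transitivity observation forces $\mu_\Gamma$ to be a scalar multiple of $|\cdot|$, whence $C_{|\cdot|}=C_{\mu_\Gamma}\leq C_G$. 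The reverse inequality $C_G\leq C_{|\cdot|}$ is immediate from the definition of $C_G$ as an infimum over doubling measures, so both values coincide.

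There is essentially no technical obstacle here; the work was done in Theorem~\ref{t:symmetric}. The only thing worth being careful about is to state the scalar-multiple reduction from $\aut(G)$-invariance explicitly, since this is the mechanism by which symmetrized measures collapse onto the counting measure in the vertex-transitive setting. For (ii) one should also note that invoking the full strength of Theorem~\ref{t:symmetric} is slight overkill, as the averaging step needed for finite $\aut(G)$ is already contained in Lemma~\ref{l:finite} and the remark right after it.
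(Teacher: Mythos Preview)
Your proof is correct and follows exactly the approach the paper intends: the corollary is stated as a direct consequence of Theorem~\ref{t:symmetric}, and your argument spells out precisely this deduction via the observation that an $\aut(G)$-invariant measure on a vertex transitive graph must be a scalar multiple of the counting measure. Your remark that for part~(ii) the finite averaging of Lemma~\ref{l:finite} already suffices is also in line with the paper's own remark following that lemma.
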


Let us see next that property $(i)$ holds actually under more general assumptions. 

\begin{prop}
Let $G$ be a vertex transitive graph. $G$ supports a doubling measure if and only if $|\cdot|$ is doubling.
\end{prop}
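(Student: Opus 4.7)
The plan is as follows. The ``if'' direction is immediate, since whenever the counting measure $|\cdot|$ is doubling it is itself a doubling measure on $G$. For the converse, I would combine the Luukkainen--Saksman characterization of metrically doubling spaces (already invoked in Section~\ref{sec2}, via \cite{LuSa}) with the vertex transitivity of $G$. Assuming $G$ supports a doubling measure, then since every graph is complete as a discrete metric space, $G$ is metrically doubling: there exists $K\in\mathbb N$ such that, applying the covering property at radius $2r$, every ball $B(v,2r)$ can be covered by at most $K$ balls of radius $r$, say
$$
B(v,2r) \subseteq \bigcup_{i=1}^{K} B(x_i,r)
$$
for some $x_1,\ldots,x_K\in V_G$ (possibly depending on $v$ and $r$).

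Next I would exploit vertex transitivity to convert this covering estimate into a uniform cardinality estimate. For each $i$, pick $\sigma_i\in \aut(G)$ with $\sigma_i(v)=x_i$. Since automorphisms are isometries, $B(x_i,r)=\sigma_i(B(v,r))$, and in particular $|B(x_i,r)|=|B(v,r)|$. Summing over the cover yields
$$
|B(v,2r)| \leq \sum_{i=1}^{K} |B(x_i,r)| = K\,|B(v,r)|,
$$
and the bound is uniform in both $v$ and $r\geq 0$. This is exactly the statement that the counting measure $|\cdot|$ is doubling on $G$.

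The only minor point to verify is that the inequality is meaningful, i.e.\ that all the cardinalities involved are finite; this is automatic because any graph supporting a doubling measure has $\Delta_G<\infty$, so closed balls of finite radius are finite. I do not anticipate any genuine obstacle: the entire depth of the argument is carried by the Luukkainen--Saksman theorem, while vertex transitivity plays a purely cosmetic role, upgrading the covering bound into an equality between cardinalities of the covering balls. Note in particular that no amenability assumption on $\aut(G)$ is required, in contrast with Corollary~\ref{c:vertextrans}(i), because we are not aiming to produce an invariant doubling measure but merely to show that the counting measure itself already does the job.
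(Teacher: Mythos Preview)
Your proposal is correct and follows essentially the same route as the paper's proof: both arguments show that a graph supporting a doubling measure is metrically doubling, cover $B(v,2r)$ by a bounded number of balls of radius $r$, and then use vertex transitivity to conclude that all such balls have the same cardinality. The only cosmetic difference is that the paper derives the metrically doubling property explicitly via the $r$-separated points bound (citing \cite{VK}), whereas you invoke the equivalence from \cite{LuSa}; note that the direction you actually need---doubling measure implies metrically doubling---is the elementary Coifman--Weiss direction \cite{CW} and does not require completeness or the full strength of \cite{LuSa}.
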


\begin{proof}
Suppose $\mu$ is a doubling measure on $G$. A moment thought reveals that for every $v\in V_G$, $r>0$ and $\lambda\geq1$
$$
\mu(B(v,\lambda r))\leq C_\mu \lambda^{\log_2 C_\mu} \mu (B(v,r)).
$$
It follows (cf. \cite{VK}) that there is a constant $D$ (depending only on $C_\mu$), such that the cardinality of a set of $r$-separated points in $B(v,\lambda r)$ is always bounded by $D\lambda^{\log_2 C_\mu}$. In particular, this implies that for every $x\in V_G$, we can find a set $F\subset V_G$ with $|F|\leq D C_\mu$ such that
$$
B(x,2r)\subset \bigcup_{v\in F} B(v,r).
$$

Moreover, since $G$ is vertex transitive, it follows that $|B(v,r)|=|B(x,r)|$ for every $v\in V_G$ and $r>0$. In particular we have
$$
|B(x,2r)|\leq |F||B(x,r)|\leq D C_\mu|B(x,r)|,
$$
which shows that $|\cdot|$ is doubling.
\end{proof}

It should be noted that the fact that $\aut(G)$ is amenable, does not necessarily imply that there exists a doubling measure on $G$, as the following shows. 

\begin{example}
For every $r\geq 3$ there is a graph $G$ with $\Delta_G=r+1$ such that $\aut(G)=\{id_{V_G}\}$ and $G$ does not support any non-trivial doubling measure.
\end{example}

\begin{proof}
For $r\geq3$, let $T_r$ denote the infinite $r$-homogeneous regular tree and let $(x_i)_{i\in\mathbb N}$ be an enumeration of its vertices. We will consider a graph $G$ whose vertex set $V_G$, consists of all pairs $(x_i,j)$ with $j=0,\ldots,i$, and two vertices $(x_i,j)$, $(x_{i'},j')$ 
are connected by an edge in $E_G$ if $i=i'$ and $|j-j'|=1$ or $i\neq i'$, $j=j'=0$ and $x_i$ is connected to $x_{i'}$ by an edge in $E_{T_r}$.

Let $d_{i,j}$ denote the degree of the vertex $(x_i,j)$. We have
$$
d_{i,j}=\left\{\begin{array}{cl}
    r+1, & \text{ if } j=0, \\
    1, & \text{ if } j=i,\\
    2, & \text{ otherwise.}\\
\end{array}
\right.
$$
Since automorphisms preserve the degree of each vertex, it is easy to check that $\aut(G)=\{id_{V_G}\}$.

On the other hand, in order to see that $G$ does not support a doubling measure it is enough to check that $G$ is not metrically doubling; that is, the number of $s$-separated points in a ball of the from $B(v,2s)$ is not uniformly bounded for $s>0$ (cf. \cite{LuSa}). This follows from the properties of $T_r$. Indeed, note that for every $s>0$ the ball $B(v,2s)$ for $v=(x_0,0)$, contains $r(r-1)^{s-1}$ vertices of the form $(x_{i},0)$ with $d(x_0,x_{i})=s$, and for each of them we can consider another vertex of the form $(x_{j_i},0)$ with $d(x_i,x_{j_i})=s$ and $d(x_0,x_{j_i})=2s$, so that $d(x_{j_i},x_{j_{i'}})>s$ for $i\neq i'$.
\end{proof}

\section{The least doubling constants $C_G$ vs. $C_G^0$}\label{sec5}

Given that the restricted doubling constant $C_G^0$ can be more easily computed by Theorems \ref{thm:doubling spectra} and \ref{thm:doubling spectra infinite} than $C_G$ itself, the purpose of this section is to clarify when $C_G=C_G^0$ and use this to compute $C_G$ for certain families of graphs. Let us begin with a simple observation that characterizes when $C_G=C_G^0$:

\begin{prop}\label{lemachorra}
For a finite graph $G$, let $\mu_0$ denote the measure associated to the Perron eigenvector of the adjacency matrix. The following are equivalent:
\begin{enumerate}
\item[{(i)}] $C_G=C_G^0$.
\item[{(ii)}] $C_{\mu_0}=C_{\mu_0}^0$.
\end{enumerate}
\end{prop}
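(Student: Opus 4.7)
The proof will rest on two general facts that hold for every finite graph $G$, together with the main content of Theorem \ref{thm:doubling spectra}. First, since $C_\mu\geq C_\mu^0$ for every doubling measure $\mu$, taking the infimum over $\mu$ gives $C_G\geq C_G^0$. Second, by Theorem \ref{thm:doubling spectra}, the Perron measure $\mu_0$ satisfies $C_{\mu_0}^0=C_G^0=1+r(A_G)$, and moreover $DM^0(G)$ consists of scalar multiples of $\mu_0$ only. I will also use the existence of doubling minimizers given by Proposition~\ref{p:DMconvex}.

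For (ii)$\Rightarrow$(i), I would simply chain inequalities: assuming $C_{\mu_0}=C_{\mu_0}^0$, one has
\[
C_G\leq C_{\mu_0}=C_{\mu_0}^0=C_G^0\leq C_G,
\]
so $C_G=C_G^0$. The first inequality uses that $\mu_0$ is a candidate in the infimum defining $C_G$; the last one is the general fact recalled above.

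For the converse (i)$\Rightarrow$(ii), the plan is to exploit the uniqueness clause in Theorem~\ref{thm:doubling spectra}. By Proposition~\ref{p:DMconvex}, pick any $\mu\in DM(G)$, so $C_\mu=C_G$. Assuming (i), we have $C_\mu=C_G^0$, and therefore
\[
C_G^0\leq C_\mu^0\leq C_\mu=C_G^0,
\]
which shows $C_\mu^0=C_G^0$, that is $\mu\in DM^0(G)$. By Theorem~\ref{thm:doubling spectra} then $\mu$ must be a positive scalar multiple of $\mu_0$, so $C_{\mu_0}=C_\mu=C_G=C_G^0=C_{\mu_0}^0$, which is (ii).

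There is no real obstacle here: the whole statement is a bookkeeping exercise once one has Theorem~\ref{thm:doubling spectra} (which delivers both the identification $C_G^0=1+r(A_G)$ and, crucially, the \emph{uniqueness} of the $C^0$-minimizer) and the existence of a minimizer for $C_G$ from Proposition~\ref{p:DMconvex}. The only slightly subtle point is that the direction (i)$\Rightarrow$(ii) genuinely needs uniqueness of the Perron minimizer: without it one could have many measures minimizing $C^0$ while only some of them minimize $C$, and we would have no reason to conclude that $\mu_0$ in particular is a full doubling minimizer.
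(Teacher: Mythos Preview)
Your proof is correct and follows essentially the same approach as the paper: both directions rely on Theorem~\ref{thm:doubling spectra} (including the uniqueness of the $C^0$-minimizer) together with the existence of a doubling minimizer from Proposition~\ref{p:DMconvex}, via the same chain of inequalities. The only cosmetic difference is that the paper proves (ii)$\Rightarrow$(i) by contrapositive, whereas you argue directly.
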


\begin{proof}
Suppose first $C_G=C_G^0$, and let $\mu\in DM(G)$. We have that
$$
C_{\mu}^0\leq C_\mu=C_G=C_G^0\leq C_{\mu}^0.
$$
Hence, we have that 
\begin{equation*}
C_{\mu}^0= C_\mu=C_G=C_G^0.
\end{equation*}
In particular, by Theorem \ref{thm:doubling spectra}, it follows that $\mu=\alpha\mu_0$ for some scalar $\alpha>0$. Thus, $C_{\mu_0}^0= C_{\mu_0}$ as claimed.

Conversely, if $C_G^0<C_G$, then we have
$$
C_{\mu_0}^0=C_G^0<C_G\leq C_{\mu_0},
$$
as claimed.
\end{proof}

In \cite{ST} we computed the least doubling constant of some families of finite graphs, including complete graphs $K_n$, star graphs $S_n$ and cycles $C_n$:
$$
C_{K_n}=n, \quad \quad C_{S_n}=1+\sqrt{n}, \quad \quad C_{C_n}=3.
$$
In \cite{DST} we have analyzed the constants of path graphs $L_n$, $\mathbb N$, and $\mathbb Z$ and have shown that
$$
\lim_{n\rightarrow\infty} C_{L_n}=3=C_{\mathbb Z}=C_{\mathbb N},
$$
where $L_n$ denotes the path or linear graph of $n$ vertices. It was also shown in \cite{DST} that $C_{L_n}=C_{L_n}^0$ if and only if $n\leq 8$.

In order to expand this list, we will need the following result:

\begin{prop}\label{p:diam2}
Let $G$ be a graph with $\diam(G)=2$. For every measure $\mu$ on $G$ we have $C_\mu=C_\mu^0$. In particular,  $C_G=C_G^0$.
\end{prop}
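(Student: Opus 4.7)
The plan is to reduce the claim to showing that $C_\mu^k \leq C_\mu^0$ for every $k \in \mathbb N \cup \{0\}$, which together with $C_\mu = \sup_k C_\mu^k$ immediately gives $C_\mu = C_\mu^0$. Since $\diam(G) = 2$, for every $k \geq 2$ and every $v \in V_G$ both $B(v,k)$ and $B(v,2k+1)$ coincide with $V_G$, so $C_\mu^k = 1 \leq C_\mu^0$. The entire content of the proposition therefore lives in the case $k = 1$, where one must control the ratio $\mu(B(v,3))/\mu(B(v,1)) = \mu(V_G)/\mu(B(v,1))$ by $C_\mu^0$.

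For the $k = 1$ estimate I would exploit the elementary covering
$$V_G \;=\; \bigcup_{w \in B(v,1)} B(w,1),$$
valid for every $v \in V_G$ under the diameter-$2$ hypothesis: any vertex $x \neq v$ lies at distance $1$ or $2$ from $v$, and in the second case $v$ and $x$ share a common neighbor $w$, so $x \in B(w,1)$ with $w \in B(v,1)$. Combining this covering with subadditivity of $\mu$ and with Lemma \ref{l:holder} applied to the numerator $\sum_{w \in B(v,1)} \mu(B(w,1))$ and the denominator $\sum_{w \in B(v,1)} \mu(w) = \mu(B(v,1))$, one obtains
$$\frac{\mu(V_G)}{\mu(B(v,1))} \;\leq\; \frac{\sum_{w \in B(v,1)} \mu(B(w,1))}{\sum_{w \in B(v,1)} \mu(w)} \;\leq\; \max_{w \in B(v,1)} \frac{\mu(B(w,1))}{\mu(w)} \;\leq\; C_\mu^0.$$
Taking the supremum over $v$ gives $C_\mu^1 \leq C_\mu^0$, so $C_\mu = C_\mu^0$ as desired. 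The final assertion $C_G = C_G^0$ follows by taking the infimum over all doubling measures $\mu$.

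No serious obstacle is expected: the whole argument is driven by the covering $V_G = \bigcup_{w \in B(v,1)} B(w,1)$, which is precisely where the diameter-$2$ hypothesis is used. The only minor subtlety is that this cover is in general not disjoint (neighbors of $v$ typically have common neighbors), so one can only invoke subadditivity rather than additivity before appealing to Lemma \ref{l:holder}; this is harmless since the inequality is the direction needed.
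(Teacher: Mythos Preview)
Your proof is correct and follows essentially the same approach as the paper: both arguments rest on the covering $V_G=\bigcup_{w\in B(v,1)}B(w,1)$ (valid because $\diam(G)=2$) together with Lemma~\ref{l:holder} to bound the $k=1$ ratio by $C_\mu^0$. The paper's write-up is slightly terser---it bounds $\mu(B(v,2))/\mu(B(v,1))$ directly without separately disposing of $k\geq2$---but since $B(v,2)=B(v,3)=V_G$ under the diameter hypothesis, the two presentations are really the same argument.
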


\begin{proof}
Since $\diam(G)=2$, for every $v\in V_G$ we have that 
\begin{equation}\label{bola2}
V_G=B(v,2)=\bigcup_{w\in B(v,1)}B(w,1).
\end{equation}
Hence, if $\mu$ is any measure on $G$, then for every $v\in V_G$, by \eqref{bola2} and Lemma \ref{l:holder}, we have that
$$
\frac{\mu(B(v,2))}{\mu(B(v,1))}\leq\frac{\sum_{w\in B(v,1)}\mu(B(w,1))}{\mu(B(v,1))}=\frac{\sum_{w\in B(v,1)}\mu(B(w,1))}{\sum_{w\in B(v,1)}\mu(w)}\leq \max_{w\in B(v,1)}\Big\{\frac{\mu(B(w,1))}{\mu(w)}\Big\}.
$$
It follows that $C_\mu= C_\mu^0$, and in particular $C_G=C_G^0$.
\end{proof}

\subsection{Significant families of graphs with diameter 2}\label{subsec}
\begin{enumerate} [leftmargin=*]
\item[{(i)}] {\em Friendship graphs}: The celebrated Friendship Theorem \cite{ERS}, states that the only graphs with the property that every two vertices have exactly one neighbor in common are the friendship graphs $F_n$ ($2n+1$ vertices, obtained by joining $n$ copies of $C_3$ with a common vertex). Let $v_1$ be the vertex of degree $2n$, and $(v_i)_{i=2}^{2n+1}$ the vertices of degree 3. For every $n\in \mathbb N$ we have $C_{F_n}=C_{F_n}^0=1+\frac12(1+\sqrt{1+8n})$. Indeed, by Proposition \ref{p:diam2}, Theorem \ref{thm:doubling spectra} and \cite[Proposition~2.2]{AJR}, we have that 
$$
C_{F_n}= C_{F_n}^0=1+r(A_{F_n})=1+\frac12(1+\sqrt{1+8n}). 
$$
Moreover, 
$$
\mu(v_i)=
\left\{
\begin{array}{ccl}
4n,  &   & i=1,  \\
1+\sqrt{1+8n},  &   &   2\leq i\leq 2n+1,
\end{array}
\right.
$$
defines the unique doubling minimizer (up to constants).

\item[{(ii)}] {\em Wheel graphs}: Let $W_n$ denote the wheel graph with $n$ vertices, being $v_1$ a vertex of degree $n-1$ and the remaining vertices $(v_i)_{i=2}^n$ of degree 3. For every $n\in \mathbb N$ we have $C_{W_n}=C_{W_n}^0=2+\sqrt{n}$. Indeed, by Proposition \ref{p:diam2}, Theorem \ref{thm:doubling spectra} and \cite[5.6]{M}, it follows that
$$
C_{W_n}=C_{W_n}^0=1+r(A_{W_n})=2+\sqrt{n}.
$$
Moreover,  
$$
\mu(v_i)=
\left\{
\begin{array}{ccl}
n-1,  &   & i=1,  \\
1+\sqrt{n},  &   &   2\leq i\leq n,
\end{array}
\right.
$$
defines the unique doubling minimizer (up to constants).


\item[{(iii)}] {\em Complete bipartite graphs}: $C_{K_{m,n}}=1+\sqrt{mn}$

\item[{(iv)}] {\em Cocktail party graph}: $C_{R_{2n-2,2n}}=2n-1$. The {\em cocktail party} is the unique regular graph with $2n$ vertices of degree $2n-2$. 

\item[{(v)}] {\em Petersen graph}: $C_{R_{3,10}}=4$.

\item[{(vi)}] {\em Hoffman-Singleton graph}: $C_{R_{7,50}}=8$.

\item[{(vii)}] {\em Clebsch graph}: $C_{R_{4,16}}=5$.

\end{enumerate}

Items (iv)-(vii) correspond to regular graphs. Recall that a regular graph of degree $k$ is a graph all of whose vertices have degree $k$. Let $\Delta(G)$ denote the maximum degree of $G$. It is well-know (cf. \cite[Theorem 6]{M}) that for connected graphs, $\lambda_1(A_G)=\Delta(G)$ if and only if $G$ is a regular graph. In particular, by Theorem \ref{thm:doubling spectra}, every $k$-regular graph $G$ satisfies $C_G^0=k+1$.

\begin{figure}[h]
\begin{center}
\includegraphics[width=\textwidth]{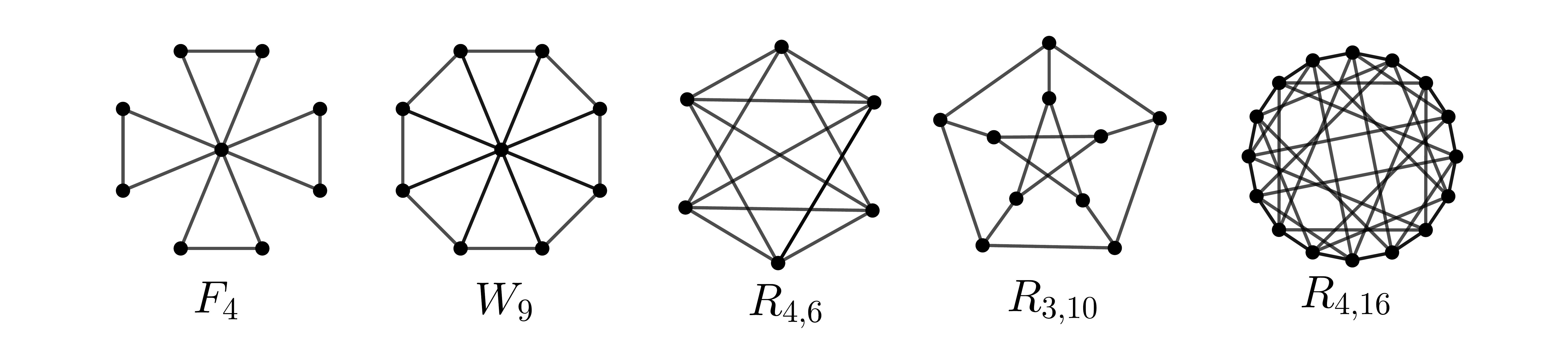}
\caption{Some of the graphs in Subsection~\ref{subsec}.}
\end{center}
\end{figure}


\subsection{Graphs with $C_G>C_G^0$}

Let us consider the following three-legs graph $T$: this is a tree with one vertex $v_1$ of degree 3, whose 3 neighbors $v_2,v_3,v_4$ have degree 2, and each of these is connected to a leave $v_5,v_6,v_7$ (see Figure \ref{Tripar}). It can be easily checked that the Perron eigenvector of $A_T$ yields the measure 
$$
\mu(v_i)=
\left\{
\begin{array}{ccl}
3,  &   & i=1,  \\
2,  &   &   2\leq i\leq 4,\\
1,  &   & 5\leq i\leq 7.
\end{array}
\right.
$$
Since 
$$
C_\mu\geq\frac{\mu(B(v_5,3))}{\mu(v_5)}=\frac{10}{3}>3=C_\mu^0,
$$
by Proposition \ref{lemachorra}, it follows that $C_T>C_T^0$. The following provides an accurate computation of $C_T$. Although the proof is completely elementary, we have decided to include the details as an illustration of the method that can be used for computing $C_G$ for other graphs.

\begin{figure}[h]
\begin{center}
\includegraphics[width=.4\textwidth]{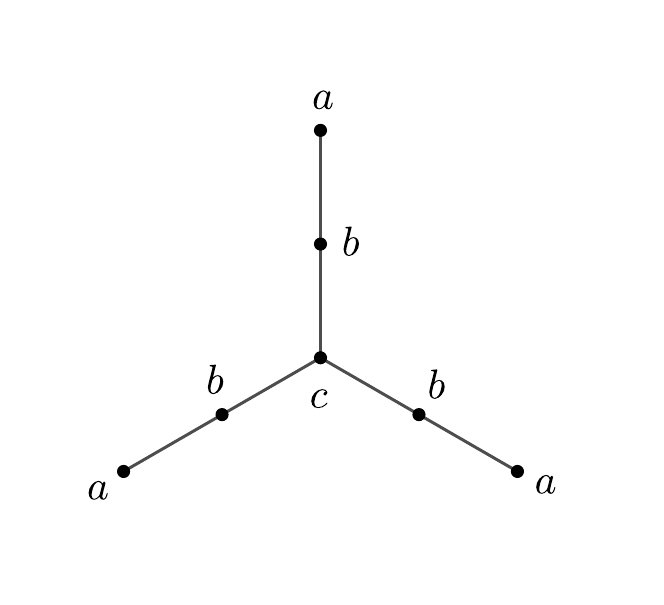}
\caption{The three-legs graph $T$.}\label{Tripar}
\end{center}
\end{figure}

\begin{prop}\label{p:tripar}
Let $T$ be the  three-legs graph given above. It holds that $C_T-1$ coincides with the largest root of the polynomial $x^3+x^2-5x-3$. In particular,
$$
\sup_k C_T^k\leq3 <C_T\approx  3.0861.
$$
\end{prop}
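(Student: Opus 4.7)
The plan is to exploit the obvious $S_3$-symmetry of $T$ (permuting the three legs) to reduce the minimization of $C_\mu$ to a three-parameter problem. Since $|\aut(T)|=6<\infty$, Lemma \ref{l:finite} combined with Proposition \ref{p:DMconvex} guarantees an $S_3$-invariant doubling minimizer, which is determined by three positive numbers $a=\mu(v_1)$, $b=\mu(v_i)$ for $i=2,3,4$, and $c=\mu(v_j)$ for $j=5,6,7$. Because $\diam(T)=4$, formula \eqref{eq:C_G} requires controlling only $k\in\{0,1,2\}$, and after computing $B(v,r)$ for one representative in each orbit I obtain at most eight ratios. Two of these coincide (the one at $(v_5,k=2)$ equals the one at $(v_2,k=1)$) and several are clearly dominated (for instance, the ratios with denominator $a+3b$ or $a+3b+c$ are small compared with those whose denominators involve only leaves), so the task reduces to minimizing the maximum of a short list in $(a,b,c)$.

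The main step is to identify the binding constraints at the optimum. Guided by the fact that the Perron measure $(3,2,2,2,1,1,1)$ fails because of $R_5^1=\mu(B(v_5,3))/\mu(v_5)=10/3$, I claim that at the minimizer
\[
\frac{a+3b}{a}\;=\;\frac{a+b+c}{b}\;=\;\frac{a+3b+c}{b+c}\;=\;C_T.
\]
Setting the common value equal to $C$ and solving the first two equations gives $b=(C-1)a/3$ and $c=\bigl[(C-1)^2-3\bigr]a/3$. Substituting into the third and clearing denominators yields
\[
(C-1)(C+2)\;=\;C\bigl[(C-1)^2+(C-1)-3\bigr],
\]
which simplifies to $C^3-2C^2-4C+2=0$, i.e.\ $x^3+x^2-5x-3=0$ with $x=C-1$. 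The largest (and only positive) root gives $C_T\approx 3.0861$.

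The hard part is the two-sided verification. For the upper bound I must check that, with $(a,b,c)$ fixed as above, the remaining ratios -- namely $(b+c)/c$, $(a+3b+3c)/(a+3b)$, $(a+3b+3c)/(a+b+c)$, and $(a+3b+3c)/(a+3b+c)$ -- are all $\le C_T$; this is a direct algebraic check using $(C-1)^2>3$ at the root. For the lower bound I must rule out that some other triple of binding constraints produces a smaller $C$: either by a KKT/perturbation argument (noting the ratios are quasiconvex in the logarithms of $a,b,c$), or by a short finite case analysis in the normalized $(\alpha,\beta)=(a/c,b/c)$-plane, using the equality case of Lemma \ref{l:holder} to detect when a candidate is stationary.

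Finally, for the inequality $\sup_k C_T^k\le 3$, Theorem \ref{thm:doubling spectra} gives $C_T^0=1+r(A_T)=3$, since $A_T$ has Perron eigenvalue $2$ with eigenvector $(3,2,2,2,1,1,1)$. For $k=1$ and $k=2$, the counting measure already shows $C_T^k\le 3$: on the three orbit representatives $v_1,v_2,v_5$ the ratios $|B(v,2k+1)|/|B(v,k)|$ can be read off directly from the structure of $T$ and never exceed $3$. Combined with $C_T>3$ from the cubic above, this yields the strict inequality $\sup_k C_T^k\le 3<C_T$.
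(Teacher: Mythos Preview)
Your proposal is essentially correct and runs parallel to the paper: you exploit the $S_3$-symmetry (via Lemma~\ref{l:finite}) to reduce to three parameters, you single out the same three ratios that the paper uses (in the paper's labelling $(a,b,c)=$ (leaf, degree-$2$, center) these are $\tfrac{a+b+c}{b}$, $\tfrac{3b+c}{c}$, $\tfrac{a+3b+c}{a+b}$), and your algebra leading to $C^3-2C^2-4C+2=0$, i.e.\ $x^3+x^2-5x-3=0$ for $x=C-1$, is correct. Your upper-bound check (verifying the remaining ratios at the candidate weights) is the same idea as in the paper. For $\sup_k C_T^k\le 3$ your use of Theorem~\ref{thm:doubling spectra} at $k=0$ and of the counting measure at $k=1,2$ is valid; the paper instead exhibits ad hoc weights for each $k$, but both routes work.

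The one place where you make the argument harder than necessary is the \emph{lower bound}. You propose a KKT/perturbation argument or a case analysis to determine the active constraints, but none of that is needed. The paper's point is that for \emph{any} symmetric measure (in particular a minimizer) each of the three ratios above is $\le C_T$; writing these as linear inequalities in the weights and eliminating successively (substitute the bound from one into the next) gives directly
\[
C_T^3-2C_T^2-4C_T+2\;\ge\;0,
\]
which, since $C_T\ge 2$, forces $C_T-1$ to lie at or above the largest root of $x^3+x^2-5x-3$. So one never needs to know which constraints are binding: the three \emph{inequalities} alone already pin down the lower bound. This elimination is both shorter and sturdier than the optimization-theoretic route you sketch, and it is worth internalising as a general technique for this kind of min--max problem.
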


\begin{proof}
As $\diam(G)=4$ we have to show $C_G^k\leq 3$ for $k=0,1,2$. By Lemma \ref{l:finite}, we can assume all measures involved in the computations are symmetric; that is, we can set $a$ for the value of the measure on each leaf, $b$ for its value on the vertices of degree 2 and $c$ for that of the vertex of degree 3, as in Figure \ref{Tripar}.

Note that
\begin{equation*}
C_T^0=\inf_{a,b,c}\sup\Big\{\frac{a+b}{a},\frac{a+b+c}{b},\frac{3b+c}{c}\Big\},
\end{equation*}
so if we take $a=1$, $b=2$, $c=3$, it follows that $C_T^0\leq 3$.

Similarly, we also have
\begin{equation*}
C_T^1=\inf_{a,b,c}\sup\Big\{\frac{a+3b+c}{a+b},\frac{3a+3b+c}{a+b+c},\frac{3a+3b+c}{3b+c}\Big\},
\end{equation*}
so taking $a=b=1$ and $c=2$ we get $C_T^1\leq 3$.


Finally, note that 
\begin{equation*}
C_T^2=\inf_{a,b,c}\sup\Big\{\frac{3a+3b+c}{a+b+c},1\Big\}\leq 3.
\end{equation*}
Now, let us compute explicitly $C_T$. 
To give an upper bound let us consider the weights
$$
a=1\quad b= (1+r)/2\quad c= (-2 + r + r^2)/2,
$$
where $r\approx 2.0861$ is the largest zero of the polynomial $x^3+x^2-5x-3=0$. It is easy to check that for these particular weights $C_T\leq1+r.$ 

On the other hand, let $\overline{a}, \overline{b}, \overline{c}$ denote the weights that give $C_T$. We know that 
\begin{equation*}
\begin{split}
\sup\left\{\dfrac{\overline{a}+\overline{b}+\overline{c}}{\overline{b}},\dfrac{3\overline{b}+\overline{c}}{\overline{c}},\dfrac{\overline{a}+3\overline{b}+\overline{c}}{\overline{a}+\overline{b}}\right\}\leq C_T.
\end{split}
\end{equation*}
Thus, if we denote $x_1=\overline{a}+\overline{b}$, $x_2=\overline{b}$ and $x_3=\overline{c}$, then $C_T$ satisfies
$$
\left\{
\begin{array}{lc}
 (i)\quad x_1+2x_2+x_3&\leq C_T x_1  \\ 
 (ii) \quad x_1+x_3&\leq C_Tx_2\\
  (iii)\quad  3x_2+x_3&\leq C_Tx_3
  \end{array}
\right.
$$
Now, an appropriate recursion of this inequalities yields
\begin{equation*}
\begin{split}
&2x_2+x_3\stackrel{(i)}{\leq}x_1 (C_T-1) \stackrel{(ii)}{\leq} (C_Tx_2-x_3) (C_T-1)\Rightarrow 0\leq x_2(C_T^2-C_T-2)-x_3 C_T\\
&\stackrel{(iii)}{\Rightarrow}0\leq \frac{x_3}{3}(C_T-1)(C_T^2-C_T-2)-x_3 C_T \Rightarrow C_T^3-2C_T^2-4C_T+2\geq 0.
\end{split}
\end{equation*}
Observe that
$$
x^3-2x^2-4x+2=(x-1)^3+(x-1)^2-5(x-1)-3
$$
and 
$$
\{x:(x-1)^3+(x-1)^2-5(x-1)-3=0\}=\{1+x:x^3+x^2-5x-3=0\}.
$$
Since $C_T\geq 3$, we conclude that $C_T\geq 1+r$.
\end{proof}

Proposition \ref{p:diam2} cannot be extended to graphs with diameter larger than 2, as the following shows.

\begin{example}
The {\em Doyle graph} (also known as the Holt graph \cite{Holt}) $\mathcal D$ is a vertex transitive 4-regular graph of diameter $3$ with $27$ vertices which satisfies
$$
C_{\mathcal D}>C_{\mathcal D}^0.
$$
\end{example}

Indeed, because $\mathcal D$ is a vertex transitive graph, if follows from Corollary \ref{c:vertextrans} that 
$$
C_{\mathcal D}=C_{|\cdot|}=\sup\left\{\frac{|B(v,3)|}{|B(v,1|)}, \frac{|B(v,1)|}{|B(v,0)|}\right\}=\sup\left\{\frac{n}{k+1}, \frac{k+1}{1}\right\}=\sup\left\{\frac{27}{5}, 5\right\}=\frac{27}{5}.
$$
On the other hand, being a $4$-regular graph, we have $C_{\mathcal D}^0=5,$ and the claim follows.

\begin{figure}[h]
\begin{center}
\includegraphics[width=.5\textwidth]{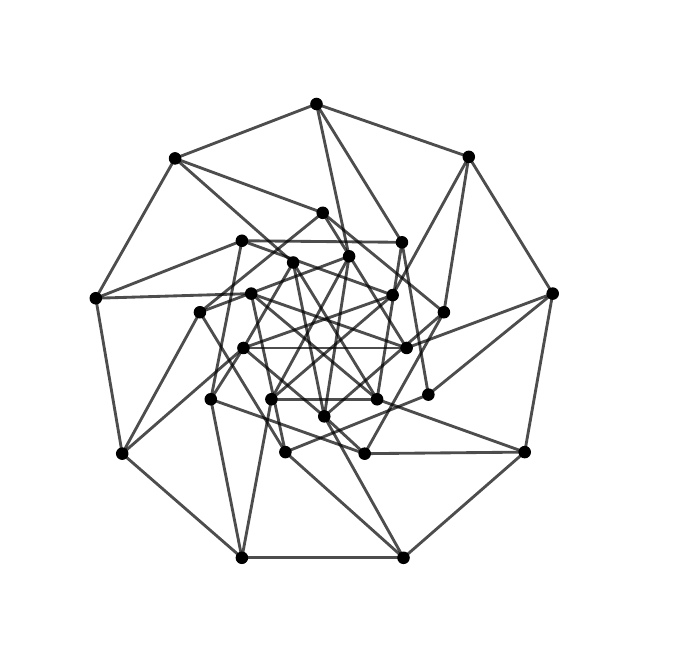}
\caption{Doyle graph. }
\end{center}
\end{figure}

\section{A characterization of graphs with doubling constant $C_G\leq3$}\label{sec6}

Motivated by Smith's description of graphs with spectral radius no greater than 2 \cite{S}, our purpose in this Section is to classify those graphs $G$ with doubling constant $C_G\leq 3$. Note that $C_G=2$ only in the case when the graph consists of 2 vertices. Let us begin with some simple facts steaming directly from the definition of $C_G^0$:

\begin{lem}\label{l:localmax}
Given a graph $G$ and $\mu$ a measure on $G$. Suppose $v\in V_G$ has degree $d_v\geq 3$ and let $(v_i)_{i=1}^{d_v}$ be the set of its neighbors. Let $a_0=\mu(v)$, and $a_i=\mu(v_i)$ for $1\leq i\leq d_v$. If $a_0\leq a_i$ for some $1\leq i\leq d_v$, then $C^0_\mu\geq3$.
\end{lem}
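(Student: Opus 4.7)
The plan is to argue by contradiction: assume that $C^0_\mu < 3$ and derive an impossibility from the hypotheses $d_v\geq 3$ and $a_0\leq a_i$ for some $i$, which after relabeling I may take to be $i=1$. The two ingredients are the restricted doubling inequality $\mu(B(w,1))<3\mu(w)$ applied, on the one hand, at $w=v$ and, on the other hand, at the neighbors $w=v_j$ for $j\geq 2$.

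First, applying the inequality at $w=v$ gives $a_0+\sum_{j=1}^{d_v}a_j<3a_0$, whence $\sum_{j=1}^{d_v}a_j<2a_0$. Since $a_1\geq a_0$, subtracting $a_1$ from both sides produces the upper bound
$$
\sum_{j=2}^{d_v}a_j < 2a_0 - a_1 \leq a_0.
$$

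Next, for every $j\in\{2,\ldots,d_v\}$ the vertex $v$ is a neighbor of $v_j$, so $B(v_j,1)\supseteq\{v,v_j\}$ and hence $\mu(B(v_j,1))\geq a_0+a_j$. The assumed inequality at $w=v_j$ then forces $a_0+a_j<3a_j$, i.e., $a_j>a_0/2$. Summing these strict lower bounds over the $d_v-1\geq 2$ indices $j=2,\ldots,d_v$, I obtain
$$
\sum_{j=2}^{d_v} a_j > \frac{d_v-1}{2}\,a_0 \geq a_0,
$$
contradicting the upper bound from the previous paragraph.

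There is no substantive obstacle: the argument is entirely elementary once one combines the doubling condition at $v$ (which forces the neighbors of $v$ to be small on average) with the doubling condition at the neighbors themselves (each of which forces the corresponding $a_j$ to exceed $a_0/2$). The only delicate point is keeping all the inequalities strict so that the borderline situation $a_i=a_0$ with $d_v=3$ still yields a genuine contradiction, which is guaranteed by the strictness of each pointwise bound $a_j>a_0/2$.
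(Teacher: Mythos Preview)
Your proof is correct and uses essentially the same ingredients as the paper's: the doubling inequality at $v$ to control $\sum_{j\neq i}a_j$ from above, and the doubling inequality at neighbors $v_j$ to control it from below. The only cosmetic difference is that the paper argues directly via a case split on whether $\sum_{j\neq i}a_j\leq a_0$ (using pigeonhole to locate a single small neighbor) or $\sum_{j\neq i}a_j>a_0$, whereas you package the same two estimates as a contradiction and apply the neighbor bound uniformly to every $j\geq2$.
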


\begin{proof}
Suppose first that $\sum_{j\neq i}a_i\leq a_0$, then there exists $j\neq i$ such that $a_j\leq \frac{a_0}{d_v-1}$. In this case, we have
$$
C^0_\mu\geq \frac{\mu(B(v_j,1))}{\mu(B(v_j,0))}\geq\frac{a_j+a_0}{a_j}\geq d_v\geq3.
$$
Otherwise, suppose now that $\sum_{j\neq i}a_i> a_0$. In that case, we have
$$
C^0_\mu\geq\frac{\mu(B(v,1))}{\mu(B(v,0))}\geq\frac{\sum_{i=0}^{d_v} a_i}{a_0}>3.
$$
\end{proof}

\begin{prop}\label{CG3}
Let $G$ be a graph. Under any of the following conditions we have that $C^0_G\geq3$:
\begin{enumerate}
  \item[{(i)}] $G$ contains a cycle.
  \item[{(ii)}] $G$ contains a vertex of degree strictly larger than 3.
  \item[{(iii)}] $G$ contains 2 vertices of degree at least 3.
\end{enumerate}
\end{prop}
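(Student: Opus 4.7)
The plan is to handle (i) and (ii) uniformly via monotonicity of the spectral radius under subgraph inclusion (applying Theorems \ref{thm:doubling spectra} and \ref{thm:doubling spectra infinite}), and to deal with (iii) by a direct argument built on Lemma \ref{l:localmax}.

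For (i), note that the cycle $C_n$ satisfies $r(A_{C_n})=2$, while the star $S_4$ appearing in (ii) has $r(A_{S_4})=\sqrt{4}=2$. Hence, if $G$ contains a copy of some $C_n$ (respectively a vertex of degree $\geq 4$, which yields $S_4$ as a subgraph), then by Corollary \ref{c:C0monotone} in the finite case, or Proposition \ref{p:C0supsubgraph} combined with Mohar's identity \eqref{eq:Mohar} in the infinite case, we obtain $r(A_G)\geq 2$, and hence $C_G^0=1+r(A_G)\geq 3$.

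For (iii), I would argue by contradiction. Suppose $\mu$ is a measure on $G$ with $C_\mu^0<3$, and let $u\neq v$ in $V_G$ satisfy $d_u,d_v\geq 3$. The contrapositive of Lemma \ref{l:localmax} forces $\mu(u)>\mu(x)$ for every neighbor $x$ of $u$, and analogously $\mu(v)>\mu(y)$ for every neighbor $y$ of $v$. If $u$ and $v$ are adjacent, this gives the immediate contradiction $\mu(u)>\mu(v)>\mu(u)$. If they are not adjacent, pick a shortest path $u=w_0,w_1,\dots,w_k=v$ with $k\geq 2$. Applying $C_\mu^0<3$ at each interior vertex $w_i$ (for $1\leq i\leq k-1$) yields
$$
\mu(w_i)+\mu(w_{i-1})+\mu(w_{i+1})\leq \mu(B(w_i,1))<3\mu(w_i),
$$
so the sequence $a_i:=\mu(w_i)$ satisfies $a_{i-1}+a_{i+1}<2a_i$ (strict midpoint concavity). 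Combined with the base inequality $a_0>a_1$ given by Lemma \ref{l:localmax} at $u$, a straightforward induction yields $a_i>a_{i+1}$ for every $0\leq i\leq k-1$; in particular $a_{k-1}>a_k=\mu(v)$. But the lemma applied at $v$ forces $\mu(v)>\mu(w_{k-1})=a_{k-1}$, contradicting the previous line.

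The main obstacle is the non-adjacent subcase of (iii): averaging at a single vertex is not enough, and the crucial observation is that the bound $C_\mu^0<3$ at every interior vertex of a path propagates into a strict concavity of $(\mu(w_i))_i$, which is incompatible with the boundary-monotonicity at the two endpoints of degree $\geq 3$ imposed by Lemma \ref{l:localmax}.
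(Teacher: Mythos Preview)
Your argument is correct. For (i) and (ii) you take the spectral-monotonicity route (via Theorems~\ref{thm:doubling spectra}/\ref{thm:doubling spectra infinite} and Corollary~\ref{c:C0monotone} or Proposition~\ref{p:C0supsubgraph}), whereas the paper gives completely elementary arguments working directly with an arbitrary measure: for a cycle one looks at the vertex of minimal weight, and for a vertex of degree $\geq 4$ one splits cases on whether the neighbors' total weight exceeds twice the central weight. The paper in fact acknowledges your approach in a remark immediately following the proposition, but prefers the direct proofs precisely because they avoid invoking the spectral machinery. For (iii) the two proofs are close cousins: both invoke Lemma~\ref{l:localmax} at the two high-degree vertices to force strict monotonicity at the endpoints of a connecting path. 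The paper first uses (i) and (ii) to reduce to a tree with $\Delta_G=3$ and then observes that the endpoint conditions $a_0>a_1$, $a_p<a_{p+1}$ force a local minimum $a_i\leq\min\{a_{i-1},a_{i+1}\}$ somewhere along the path, which immediately gives $C_\mu^0\geq 3$. Your version is slightly slicker in that it dispenses with the tree reduction: working on any shortest path and using $C_\mu^0<3$ at each interior vertex yields the strict concavity $a_{i-1}+a_{i+1}<2a_i$, which together with $a_0>a_1$ forces the whole sequence to be strictly decreasing, contradicting $a_{k-1}<a_k$. The trade-off is that the paper's proof is self-contained and elementary throughout, while yours is shorter but leans on the earlier spectral results.
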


\begin{proof}
To prove \textit{(i)}, let $\{v_i:i=1,\ldots,m\}\subset V_G$ be an $m$-cycle ($m\geq3$); that is, $\{v_1, v_m\},\,\{v_i, v_{i+1}\}\in E_G$ for $i=1\ldots,m-1$. For any measure $\mu$ on $G$, let $a_i=\mu(v_i)$ for $i=1,\ldots,m$. Let $a_{i_0}=\min\{ a_i:1\leq i\leq m\}$. It follows that
$$
C^0_{\mu}\geq \frac{\mu(B(v_{i_0},1))}{\mu(B(v_{i_0},0))}\geq \frac{a_{i_0-1}+a_{i_0}+a_{i_0+1}}{a_{i_0}}\geq 3.
$$
Since $\mu$ is arbitrary, we have $C^0_\mu\geq3$.

 \textit{(ii)}: Suppose $v\in V_G$ has degree $d_v\geq 4$. Let $(v_i)_{i=1}^{d_v}\subset V_G$ be the neighbors of $v$. For any measure $\mu$ on $G$, let $a_0=\mu(v)$ and $a_i=\mu(v_i)$ for $i=1,\ldots,d_v$. Suppose first that
$$
\sum_{i=1}^{d_v} a_i\geq 2a_0.
$$
In this case, we have
$$
C^0_\mu\geq \frac{\mu(B(v,1))}{\mu(B(v,0))}=\frac{\sum_{i=0}^{d_v} a_i}{a_0}\geq 3.
$$
Now, suppose on the other hand that
$$
\sum_{i=1}^{d_v} a_i< 2a_0.
$$
Then, necessarily, there is $1\leq j\leq d_v$ such that $a_j<\frac{2a_0}{d_v}$. In this case, as $d_v\geq4$, we have
$$
C^0_\mu\geq \frac{\mu(B(v_j,1))}{\mu(B(v_j,0))}\geq\frac{a_j+a_0}{a_j}>\frac{d_v}{2}+1\geq3.
$$
In either case, it follows that $C^0_G\geq3$.

 \textit{(iii)}: By  \textit{(i)} and \textit{(ii)}, we can assume that $G$ is a tree with maximal degree $\Delta_G=3$. Hence, without loss of generality, we can assume that $G$ contains two vertices of degree 3, $v_1,v_2$, which are connected by a path of length $p\in\mathbb N$, $\{w_{i-1},w_i\}$ for $1\leq i\leq p$, with all the vertices distinct from $v_1$ and $v_2$ having degree 2. Note that $v_1$ and $v_2$ could be neighbors; that is, $p=0$, in which case, the same proof works.

Given any measure $\mu$ on $G$, let $a_0=\mu(v_1)$, $a_i=\mu(w_i)$ for $i=1,\ldots,p$, and $a_{p+1}=\mu(v_2)$. By Lemma \ref{l:localmax}, we can assume $a_0>a_1$ and $a_p<a_{p+1}$. It follows, that there must be some $1\leq i\leq p$ such that $a_i\leq \min\{a_{i-1},a_{i+1}\}$. Therefore, we have
$$
C^0_\mu\geq\frac{\mu(B(w_i,1))}{\mu(B(w_i,0))}\geq\frac{a_{i-1}+a_i+a_{i+1}}{a_i}\geq3.
$$
\end{proof}

\begin{remark}
Alternatively, Proposition \ref{CG3} can be proved using the monotonicity of $C_G^0$ (which is a consequence of Theorem \ref{thm:doubling spectra}) and the fact that the graphs $S_n$ for $n\geq 4$, $C_n$ and $\widehat D_n$ satisfy $C_G^0\geq3$. However, we included the proof above as it is completely elementary.
\end{remark}

If a graph satisfies $C_G\leq3$, Proposition \ref{CG3} implies that $G$ must be a tree with at most one vertex of degree 3, and all the remaining vertices have degree 1 or 2. However, the list of graphs with $C_G\leq3$ is even smaller: By Theorem \ref{thm:doubling spectra}, we have that 
$$
1+\lambda_1(A_G)=C^0_G\leq C_G\leq 3,
$$
so $\lambda_1(A_G)\leq 2$. The finite graphs with this property have been considered in \cite{S} and \cite{CG} (see Figure \ref{dynkin}).

\begin{figure}[h]
\begin{center}
\includegraphics[width=\textwidth]{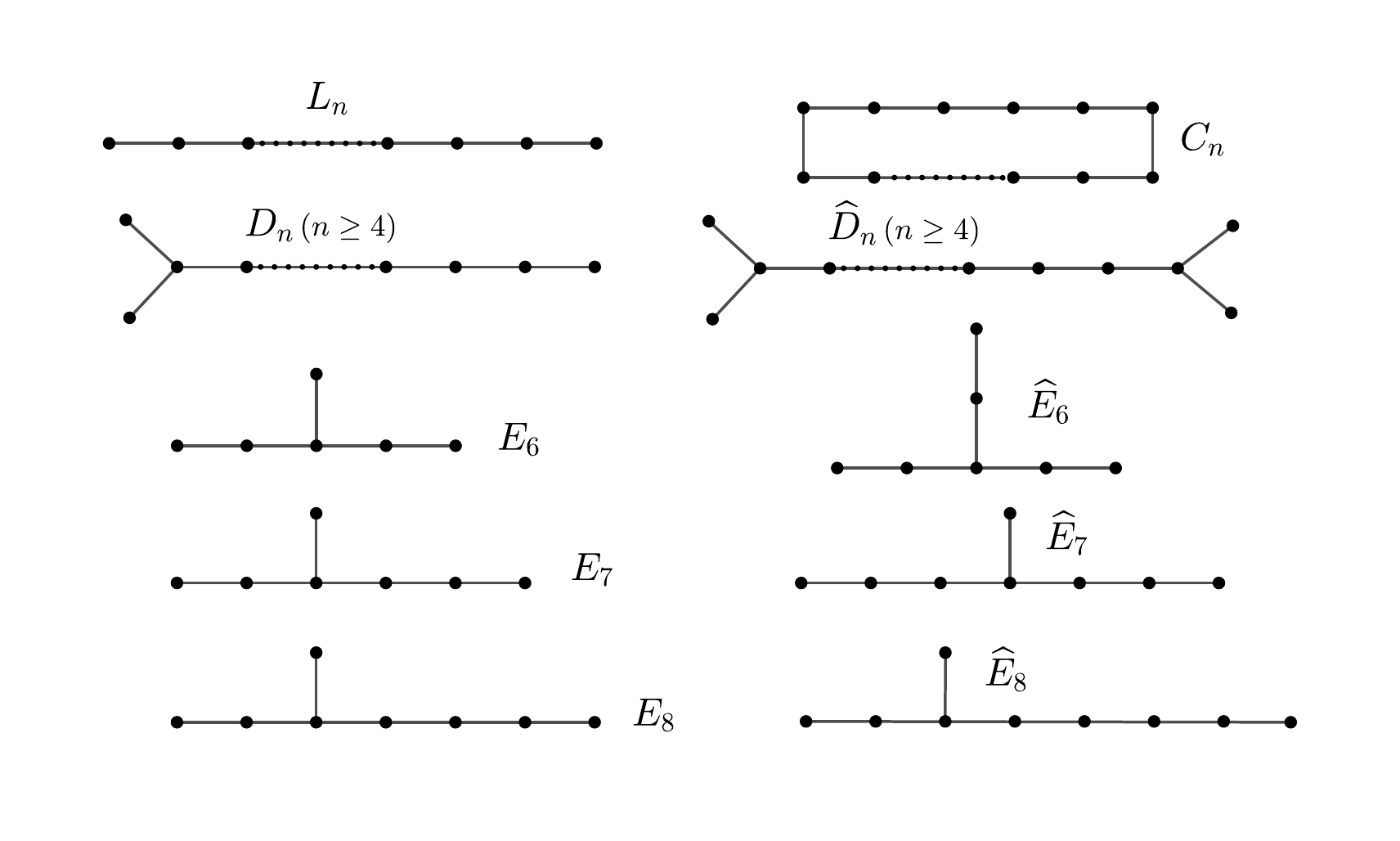}
\caption{Graphs with small doubling constant.}\label{dynkin}
\end{center}
\end{figure}


In particular, we have 
$$
C^0_{D_n}=1+2\cos{\frac{\pi}{2(n-1)}}\qquad C^0_{L_n}=1+2\cos{\frac{\pi}{n+1}},
$$
$$
C^0_{E_6}=1+2\cos{\frac{\pi}{12}},\quad C^0_{E_7}=1+2\cos{\frac{\pi}{18}},\quad C^0_{E_8}=1+2\cos{\frac{\pi}{30}},
$$
and
\[
3=C^0_{\widehat{E}_6}=C^0_{\widehat{E}_7}=C^0_{\widehat{E}_8}=C^0_{C_n}=C^0_{\widehat{D}_n},
\]
where $n=|V_{D_n}|=|V_{L_n}|=|V_{\widehat{D}_n}|=|V_{C_n}|$.

\begin{cor}\label{c:finiteCG3}
The only finite graphs with $C_G\leq 3$ are $E_6$, $E_7$, $L_n$, $D_n$, $C_n$ and $\widehat{D}_n$.
\end{cor}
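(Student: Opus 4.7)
The plan is to combine Theorem \ref{thm:doubling spectra} with the Smith--Cvetkovi\'c--Gutman classification (recalled just above) to cut the problem down to a finite list of candidates, and then to decide each case separately. Since $C_G^0 \leq C_G \leq 3$, Theorem \ref{thm:doubling spectra} forces $\lambda_1(A_G) \leq 2$, which by \cite{S, CG} means $G$ is one of $L_n$, $D_n$, $C_n$, $E_6$, $E_7$, $E_8$, $\widehat{D}_n$, $\widehat{E}_6$, $\widehat{E}_7$, $\widehat{E}_8$ (see Figure \ref{dynkin}). It then remains to decide which of these ten families actually satisfy $C_G \leq 3$.

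Several cases have already been settled. The bound $C_{L_n} < 3$ is established in \cite{DST}, $C_{C_n} = 3$ was recorded in the introduction, and Proposition \ref{p:tripar} shows that the three-legs graph, which is precisely $\widehat{E}_6$, satisfies $C_{\widehat{E}_6} > 3$. For the remaining \emph{accepted} candidates $D_n$, $\widehat{D}_n$, $E_6$, and $E_7$, I would exhibit an explicit $\aut(G)$-invariant doubling measure with constant at most $3$ (this restriction is legitimate by Lemma \ref{l:finite}); since there are few orbits and the diameter is small, this reduces to choosing one weight per orbit and checking a finite list of ratios $\mu(B(v, 2k+1))/\mu(B(v,k))$ with $0 \leq k \leq \lceil (\diam(G)-1)/2 \rceil$. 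A natural ansatz assigns small weights to leaves, intermediate ones to interior path vertices, and slightly larger ones to each vertex of degree three, with the parameters tuned to saturate the bound at $3$.

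For the remaining \emph{excluded} candidates $E_8$, $\widehat{E}_7$, and $\widehat{E}_8$, I would adapt the argument of Proposition \ref{p:tripar}. Assuming for contradiction that some $\mu$ achieves $C_\mu \leq 3$, one may take $\mu$ to be symmetric by Lemma \ref{l:finite}. Writing out the inequalities $\mu(B(v, 2k+1)) \leq 3 \mu(B(v, k))$ for well-chosen vertices (leaves at the end of the longest arm, and the branch vertex) and chaining them via Lemma \ref{l:holder} yields an algebraic recursion on the symmetric weights whose resulting polynomial inequality fails strictly, delivering the contradiction. The main obstacle will be $E_8$: since $C_{E_8}^0 < 3$, one cannot rely on Proposition \ref{lemachorra}, so the contradiction must come from ratios at radius $k \geq 1$ along the arm of length four, producing a longer chain of inequalities than in Proposition \ref{p:tripar} and a more delicate algebraic step at the end.
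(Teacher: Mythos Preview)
Your plan is correct and matches the paper's proof in overall structure: reduce via Smith's classification, verify the accepted cases by explicit symmetric measures, and exclude the rest. The one tactical difference is in how $\widehat{E}_7$ and $\widehat{E}_8$ are eliminated. You propose to run the chaining argument of Proposition~\ref{p:tripar} for all three exclusions, whereas the paper reserves that argument for $E_8$ alone and dispatches $\widehat{E}_7$ and $\widehat{E}_8$ via Proposition~\ref{lemachorra}: since these are extended Dynkin diagrams with $\lambda_1(A_G)=2$, one has $C_G^0=3$, and it suffices to check that the explicit Perron eigenvector measure $\mu_0$ (displayed in Figure~\ref{dynkin}) satisfies $C_{\mu_0}>C_{\mu_0}^0$, a single numerical verification. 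Your uniform approach works too but is more laborious for those two graphs; the paper's shortcut exploits exactly the observation you already made, namely that Proposition~\ref{lemachorra} is available whenever $C_G^0=3$. For $E_8$ both proofs necessarily do the full chaining, and the paper arrives at a degree-eight polynomial inequality in $C_{E_8}$ whose largest real root is approximately $3.02058$, confirming your anticipation that this is the most delicate case.
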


\begin{proof}
In \cite{DST} it is proved that $C_{L_n}<3$ and in \cite{ST} it was checked that $C_{C_n}=3.$ 

On the other hand, consider in $D_n$ the measure $\mu$ that gives $1$ to the two leaves and $2$ to the rest of the vertexes. It is easy to see that $C_{D_n}\leq C_{\mu}=3$. 

Let us consider in $E_6$ the measure $\mu$ and in $E_7$ the measure $\nu$ as in Figure \ref{E6E7}. One can check that $C_{E_6}\leq C_{\mu}<3$ and $C_{E_7}\leq C_{\nu}<3$. To see that $C_{E_8}>3$, one can proceed similarly as we did in Proposition \ref{p:tripar}. If we denote by $a_i$ ($1\leq i\leq 8$) a sequence of optimal weights to compute $C_{E_8}$ (see Figure \ref{E8}) one can check that 
\begin{equation*}
\begin{split}
\sup&\Big\{\frac{a_1+a_2+a_3+a_4+a_8}{a_1+a_2}, \frac{a_1+a_2+a_3}{a_2},\frac{a_2+a_3+a_4}{a_3}, \frac{a_3+a_4+a_5}{a_4}, \\&\frac{a_4+a_5+a_6}{a_5}, \frac{a_5+a_6+a_7}{a_6}, \frac{a_2+a_3+a_4+a_5+a_6+a_7+a_8}{a_5+a_6+a_7},\frac{a_3+a_8}{a_8} \Big\}\leq C_{E_8}=x,
\end{split}
\end{equation*}
implies
\begin{equation}\label{eqpolyE8}
2 x - 8 x^2 + 14 x^3 - 10 x^4 - 4 x^5 + 11 x^6 - 6 x^7 + x^8\geq 0.
\end{equation}
Since $x\geq 2$, $x\geq r\approx 3.02058$ where $r$ is the biggest root of the polynomial in \eqref{eqpolyE8}.
\begin{figure}[h]\label{E8}
\begin{center}
\includegraphics[width=.6\textwidth]{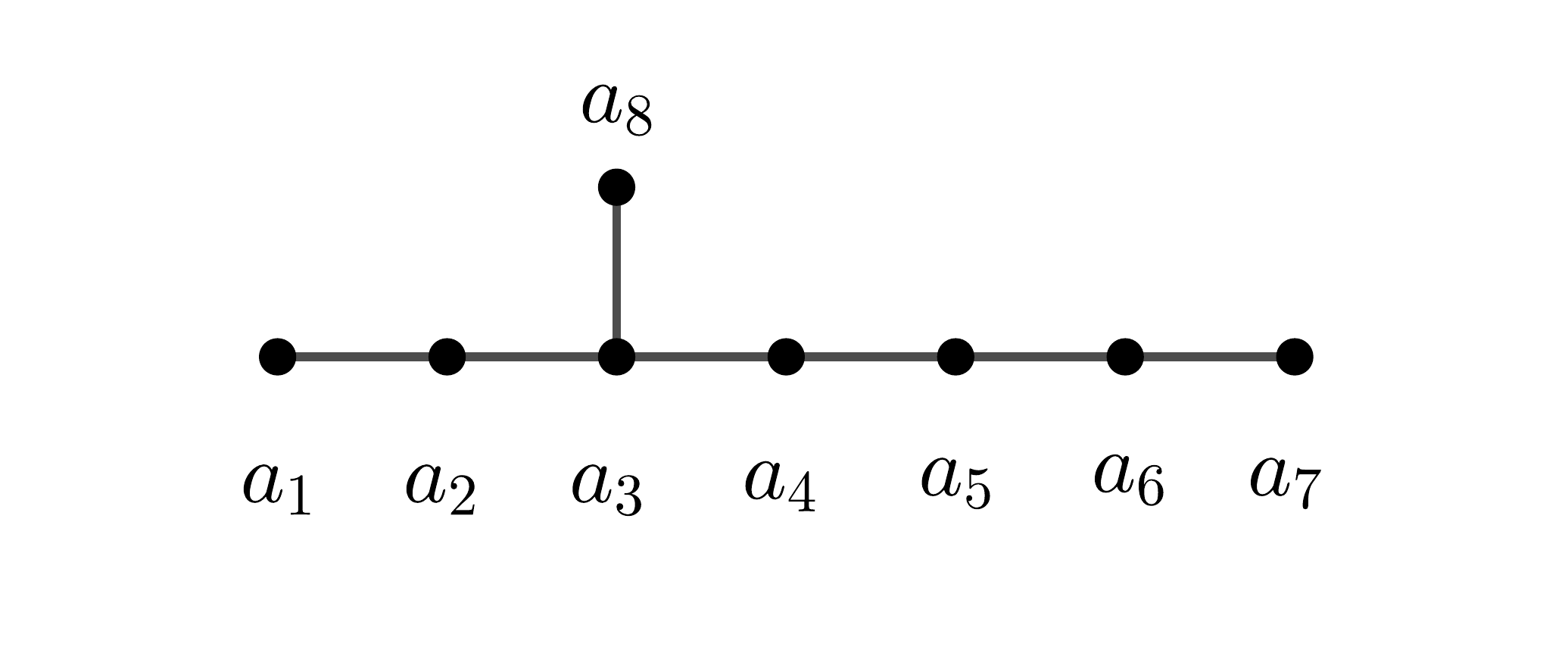}
\caption{Graph $E_8$.}
\end{center}
\end{figure}

Now, let $\mu_0$ denote the measure associated to the Perron eigenvector of the adjacency matrix (see Figure \ref{dynkin}). 
If $G$ is $\widehat{E}_6, \widehat{E}_7$ or $\widehat{E}_8$ it can be numerically computed that $C_{\mu_0}>C_{\mu_0}^0$ so, by Proposition \ref{lemachorra}, $C_{\widehat{E}_6}>3$, $C_{\widehat{E}_7}>3$ and $C_{\widehat{E}_8}>3$ (notice that in Proposition \ref{p:tripar} we computed the exact value of $\widehat{E}_6$). 

Finally, a similar argument as in the proof that the counting measure $\lambda$ on $L_n$ satisfies $C_{\lambda}=3$ can be used to show that $C_{\widehat{D_n}}=C^0_{\widehat{D_n}}=3$.
%
%

\begin{figure}[h]\label{E6E7}
\begin{center}
\includegraphics[width=\textwidth]{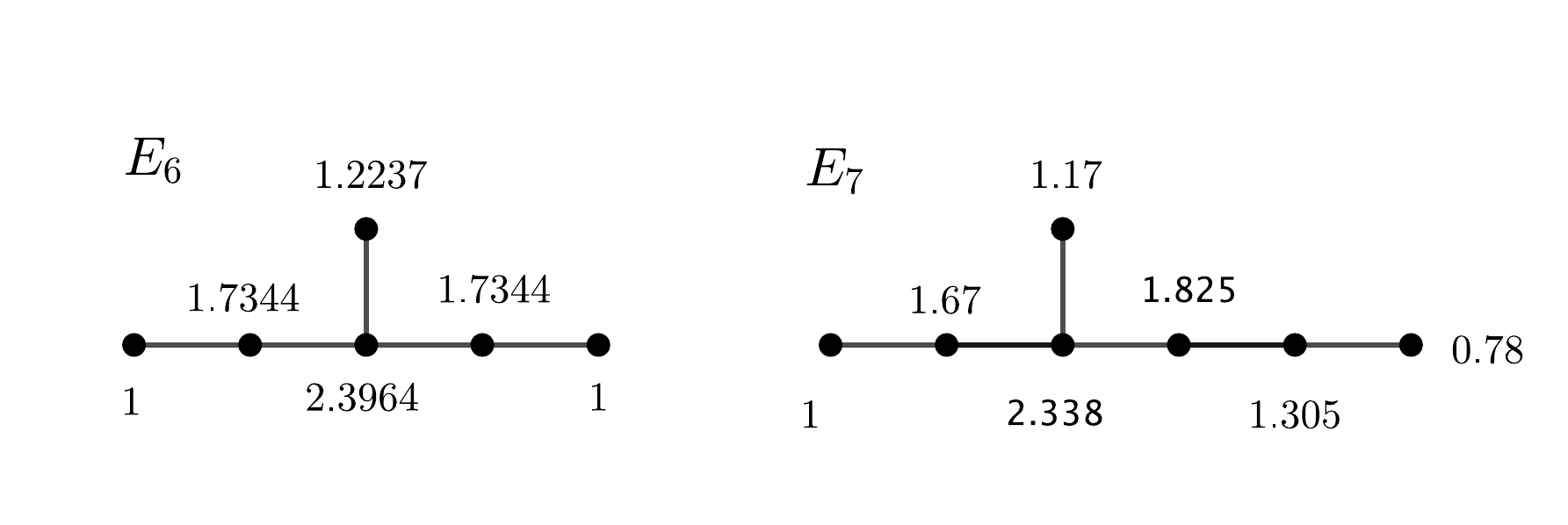}
\caption{Graphs $E_6$ and $E_7$.}
\end{center}
\end{figure}
\end{proof}

\begin{cor}\label{c:infiniteCG3}
The only infinite graphs with $C_G\leq 3$ are $\mathbb N$, $\mathbb Z$, and $D_{\infty}$.
\end{cor}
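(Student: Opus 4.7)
The proof splits into an upper bound and a classification. For the \emph{upper bound} $C_G\leq 3$: on $\mathbb N$ and $\mathbb Z$ this is already recorded in the introduction (from \cite{DST}); on $D_\infty$---the infinite tree obtained by attaching two extra leaves $\ell_1,\ell_2$ to the endpoint $v_0$ of the ray $v_0, v_1, v_2,\ldots$---the natural candidate, by direct analogy with the measure used for $D_n$ in the proof of Corollary~\ref{c:finiteCG3}, is $\mu(\ell_1)=\mu(\ell_2)=1$ and $\mu(v_i)=2$ for every $i\geq 0$. A case analysis on the center $u$ (leaf, fork, or interior $v_i$) and on whether the radius reaches the fork or the leaves yields $\mu(B(u,2k+1))\leq 3\,\mu(B(u,k))$ throughout, giving $C_{D_\infty}\leq 3$.

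For the converse, assume $G$ is infinite with $C_G\leq 3$, so that Theorem~\ref{thm:doubling spectra infinite} gives $r(A_G)\leq 2$. The uniform template to rule out structural obstructions combines Mohar's identity \eqref{eq:Mohar} with strict Perron--Frobenius monotonicity: whenever $G$ contains a finite connected subgraph $H$ with $r(A_H)=2$, infiniteness and connectedness of $G$ produce a vertex adjacent to $H$ but outside it, yielding a finite connected $F$ with $H\subsetneq F\subsetneq G$; strict monotonicity then gives $r(A_F) > 2$, and so $r(A_G)>2$ by \eqref{eq:Mohar}, a contradiction. Applying this template successively with $H=C_n$, with $H=K_{1,4}$, and with $H=\widehat D_m$ rules out cycles, vertices of degree $\geq 4$, and pairs of degree-$3$ vertices, respectively (recovering Proposition~\ref{CG3} with strict inequality in this infinite setting). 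Hence $G$ is a tree of maximum degree $\leq 3$ having at most one vertex of degree $3$.

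If $G$ has no degree-$3$ vertex it is an infinite path, so $G\cong\mathbb N$ or $G\cong\mathbb Z$. Otherwise $G$ is a tripod with three arms of sizes $a\leq b\leq c$ emanating from the fork (where $a,b,c\in\mathbb N\cup\{\infty\}$ count vertices outside the fork); infiniteness forces $c=\infty$. The same template, now applied to the extended Dynkin diagrams of Smith's list, eliminates the remaining sub-cases: $a\geq 2$ would force $G\supsetneq \widehat E_6$ (the tripod with arms $2,2,2$); $a=1$, $b\geq 3$ would force $G\supsetneq \widehat E_7$ (arms $1,3,3$); $a=1$, $b=2$ (with $c=\infty\geq 5$) would force $G\supsetneq \widehat E_8$ (arms $1,2,5$). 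Only $a=b=1$, $c=\infty$ survives, which is precisely $G=D_\infty$.

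Beyond Theorems~\ref{thm:doubling spectra} and~\ref{thm:doubling spectra infinite}, Proposition~\ref{CG3}, Mohar's identity, and the finite-dimensional strict Perron--Frobenius monotonicity underlying Corollary~\ref{c:C0monotone}, no new tools are needed. The main technical nuisance I expect is the scale-by-scale verification of the candidate measure on $D_\infty$: the ratios at the fork $v_0$ and at the ``transition'' radii (where the ball first reaches $v_0$, and later first reaches the leaves) must be checked by hand. The rest of the argument reduces to a uniform ``proper supergraph of an extended Dynkin'' application.
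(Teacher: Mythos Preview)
Your proposal is correct. The overall route matches the paper's: reduce to $r(A_G)\leq 2$, classify the resulting infinite graphs, and verify $C_{D_\infty}\leq 3$ via the same measure (weight $1$ on the two leaves, weight $2$ elsewhere). The one noteworthy difference is in the classification step. The paper appeals to Proposition~\ref{p:C0supsubgraph} to obtain $C_F^0\leq 3$ for every finite subgraph $F$, then cites the finite list in Corollary~\ref{c:finiteCG3} and the observation that only $L_n$ and $D_n$ come in arbitrary size to force $G\in\{\mathbb N,\mathbb Z,D_\infty\}$. You instead run the structural classification directly from Mohar's identity and strict Perron--Frobenius monotonicity, using only that the extended Dynkin diagrams $C_n$, $K_{1,4}$, $\widehat D_m$, $\widehat E_6$, $\widehat E_7$, $\widehat E_8$ have spectral radius exactly $2$. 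This is a bit longer but more explicit, and has the pleasant feature of being logically independent of Corollary~\ref{c:finiteCG3}: you never need the delicate computations $C_{E_8}>3$ or $C_{\widehat E_{6,7,8}}>3$ carried out there, only the classical Smith list of spectral radii.
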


\begin{proof}
Suppose $G$ is an infinite graph with $C_G\leq3$. In particular, $C_G^0\leq 3$, so by Proposition \ref{p:C0supsubgraph}, every finite subgraph $F\subset G$ must have $C^0_F\leq 3$. However, by Corollary \ref{c:finiteCG3}, the only finite graphs of arbitrary size are $D_n$ and $L_n$. This makes $\mathbb N$, $\mathbb Z$, and $D_{\infty}$ the only possible infinite graphs with constant no greater than 3. 

The fact that actually $C_\mathbb N=C_\mathbb Z=3$ has been proved in \cite{DST}. As for $D_\infty$, let $(v_j)_{j\in\mathbb N}$ be an enumeration of the vertices, so that $v_1$ and $v_2$ correspond to the leaves, $v_3$ is the only vertex of degree 3, and the remaining vertices have degree 2. It is enough to consider the measure given by
$$
\mu(v_j)=
\left\{
\begin{array}{ccl}
1,  &   & j=1,2,  \\
2,  &   &   j\geq 3,
\end{array}
\right.
$$
and a straightforward computation yields $C_{D_\infty}\leq C_\mu=3$.
\end{proof}

%

\medskip

\noindent
\textbf{Acknowledgment.} The first author would like to thank Professor Miguel \'Angel Sama (UNED),  for his valuable help with the numerical calculations (and some enlightening conversations)  involved in the preliminary versions of this manuscript. She would also like to thank Professor Jeremy T. Tyson (University of Illinois at Urbana-Champaign) for a fruitful conversation regarding this topic.

\end{document}